\numberwithin{equation}{section}
\numberwithin{figure}{section}
  \theoremstyle{plain}
  \newtheorem{lem}{\protect\lemmaname}
  \theoremstyle{remark}
  \newtheorem{rem}{\protect\remarkname}
\theoremstyle{plain}
\newtheorem{thm}{\protect\theoremname}
  \theoremstyle{plain}
  \newtheorem{cor}{\protect\corollaryname}
  \theoremstyle{plain}
  \newtheorem{prop}{\protect\propositionname}
 \theoremstyle{definition}
  \newtheorem{example}{\protect\examplename}
\DeclareMathOperator{\GL}{GL}
\newcommand{\xyR}[1]{%
\xydef@\xymatrixrowsep@{#1}}
\newcommand{\xyC}[1]{%
\xydef@\xymatrixcolsep@{#1}}
  \providecommand{\examplename}{Example}
  \providecommand{\lemmaname}{Lemma}
  \providecommand{\propositionname}{Proposition}
  \providecommand{\remarkname}{Remark}
\providecommand{\corollaryname}{Corollary}
\providecommand{\theoremname}{Theorem}
\begin{document}
\global\long\def\integral#1#2{{\displaystyle \intop_{#1}^{#2}}}

\title[Finite Dimensional Fokker-Planck Equations For CTRWLs]{Finite Dimensional Fokker-Planck Equations For  Continuous
Time Random Walk Limits}

\author{Ofer Busani}
\begin{abstract}
Continuous Time Random Walk(CTRW) is a model where particle's jumps
in space are coupled with waiting times before each jump. A Continuous
Time Random Walk Limit(CTRWL) is obtained by a limit procedure on
a CTRW and can be used to model anomalous diffusion. The distribution
$p\left(dx,t\right)$ of a CTRWL $X_{t}$ satisfies a Fractional Fokker-Planck
Equation(FFPE). Since CTRWLs are usually not Markovian, their one
dimensional FFPE is not enough to completely define them. In this
paper we find the FFPEs of the distribution of $X_{t}$ at multiple
times , i.e. the distribution of the random vector $\left(X_{t_{1}},...,X_{t_{n}}\right)$
for $t_{1}<...<t_{n}$ for a large class of CTRWLs. This allows us
to define CTRWLs by their finite dimensional FFPEs. 
\end{abstract}

\maketitle

\section{Introduction}

CTRW models the movement of a particle in space, where the $k\mbox{'th}$
jump $J_{k}$ of the particle in space succeeds the $k\mbox{'th}$
waiting time $W_{k}$. We let $N_{t}=\sup\left\{ k:T_{k}\leq t\right\} $
where $T_{k}=\sum_{i=1}^{k}W_{i}$, if $T_{1}>t$ then we set $N_{t}$
to be $0$. $N_{t}$ is just the number of jumps of the particle up
to time $t$. Then
\[
X'_{t}=\sum_{k=1}^{N_{t}}J_{k},
\]
is the CTRW associated with the time-space jumps $\left\{ \left(J_{k},W_{k}\right)\right\} _{k\in\mathbb{N}}$.
Let us now assume that $\left\{ J_{k}\right\} $ and $\left\{ W_{k}\right\} $
are independent i.i.d sequences of random variables. In order to model
the long time behavior of the CTRW we write $\left\{ \left(J_{k}^{c},W_{k}^{c}\right)\right\} _{k\in\mathbb{N}}$
for $c>0$. Here the purpose of $c$ is to facilitate the convergence
of the trajectories of $\left\{ \left(J_{k}^{c},W_{k}^{c}\right)\right\} _{k\in\mathbb{N}}$
weakly on a proper space. More precisely, we let $\mathbb{\mathcal{D}}\left([0,\infty),\mathbb{R}^{2}\right)$
be the space of c�dl�g functions $f:[0,\infty)\rightarrow\mathbb{R}^{2}$
equipped with the Skorokhod $J_{1}$ topology. We assume that
\[
\left(S_{u}^{c},T_{u}^{c}\right)=\sum_{k=1}^{\left\lfloor cu\right\rfloor }\left(J_{k}^{c},W_{k}^{c}\right)\Rightarrow\left(A_{u},D_{u}\right)\qquad c\rightarrow\infty,
\]
where $\Rightarrow$ denotes weak convergence of measures with respect
to the $J_{1}$ topology. We further assume that the processes $A_{t}$
and $D_{t}$ are independent L\'{e}vy processes and that $D_{t}$ is a
strictly increasing subordinator. Denote by $X_{t}^{c}$ the CTRW
associated with $\left\{ \left(J_{k}^{c},W_{k}^{c}\right)\right\} _{k\in\mathbb{N}}$.
We then have (\cite[Theorem 3.6]{straka2011} and \cite[Lemma 2.4.5]{Straka2011diss})
\begin{equation}
X_{t}^{c}\Rightarrow X_{t}=A_{E_{t}}\qquad c\rightarrow\infty,\label{eq:convergence of CTRW}
\end{equation}
where $E_{t}=\inf\{s:D_{s}>t\}$ is the inverse of $D_{t}$ and $\Rightarrow$
means weak convergence on $\mathbb{\mathcal{D}}\left([0,\infty),\mathbb{R}\right)$
equipped with the $J_{1}$ topology. It is well known that $X_{t}$
is usually not Markovian, a fact that makes the task of finding basic
properties of $X_{t}$ nontrivial. One such task is finding the finite
dimensional distributions(FDDs) of the process $X_{t}$, i.e. $P\left(X_{t_{1}}\in dx_{1},...,X_{t_{n}}\in dx_{n}\right)$.
In \cite{Meerschaert}, Meerschaert and Straka used a semi-Markov
approach to find the FDDs for a large class of CTRWL. It turns out
that the discrete regeneration times of $X_{t}^{c}$ converge to a
set of points where $X_{t}$ is renewed. Once we know the next time
of regeneration of $X_{t}$, we no longer need older observations
in order to determine the future behavior of $X_{t}$. More mathematically,
denote by $R_{t}=D_{E_{t}}-t$ the time left before regeneration of
$X_{t}$ then $\left(X_{t},R_{t}\right)$ is a Markov process. One
can then use the transition probabilities of $\left(X_{t},R_{t}\right)$
along with the Chapman-Kolmogorov Equations in order to find $P\left(X_{t_{1}}\in dx_{1},...,X_{t_{n}}\in dx_{n}\right)$
for $t_{1}<...<t_{n}$ and $n\in\mathbb{N}$. This method was used
in \cite{busani2014} in order to find the FDD of the aged process
$X_{t}^{t_{0}}=X_{t}-X_{t_{0}}$. It is well known (\cite[Section 4.5]{Meerschaert2011a})
that the one dimensional distribution $p\left(dx,t\right)=P\left(X_{t}\in dx\right)$
satisfies a FFPE. Once again, as $X_{t}$ is non Markovian the FFPE
satisfied by $p\left(dx,t\right)$ is not enough to fully describe
$X_{t}$(as it does when $X_{t}$ is Markovian). Hence, a dual problem
to finding the FDDs is that of finding the finite dimensional FFPEs
of the FDDs of $X_{t}$. In this paper we obtain the finite dimensional
FFPEs for a large class of CTRWL. The results generalize the well
known one dimensional FFPE of CTRW(\cite{Meerschaert2004}) as well
as results in the finite dimensional case(\cite{baule2005joint},\cite{baule2007fractional}).
\\
In Section \ref{sec:Mathematical-Background} we present relevant
mathematical background for this paper and prepare the way for our
main result. It is divided into 4 subsections; Subsection \ref{sub:Notations}
introduces the notation to be used throughout the paper, Subsection
\ref{sub:Caputo-and-Riemann-Liouville} presents the Caputo and Riemann-Liouville
fractional derivatives, Subsection \ref{sub:Pseudo-differential-operators-of}
establishes results regarding pseudo-differential operators(PDOs)
on certain multivariable functions which facilitate the proof of Theorem
\ref{thm:governing equation of multi variable h} and Subsection \ref{sub:The-Semi-Markov-Approach}
presents briefly the work in \cite{Meerschaert} upon which we establish
our results.\\
Section \ref{sec:Fokker-Planck-Equations} presents our main results;
Theorem \ref{thm:governing equation of multi variable h} gives the
finite dimensional FFPEs of $E_{t}$, Corollary \ref{cor: MD FPE of A_E_t}
states the finite dimensional FFPEs of the process $X_{t}=A_{E_{t}}$
where the outer process $A_{t}$ and the subordinator $D_{t}$ are
independent. Finally, Corollary \ref{cor:MD FPE of general A_E_t}
gives the finite dimensional FFPEs of the coupled case. Section \ref{sec:Examples}
compares our results with the well known finite dimensional case.\\
In Section \ref{sec:Pseudo-Differential-Operators-on} we show that
if $\xi\left(-k,s\right)$ is the symbol of a PDO on a suitable Banach
space then $\xi\left(-\sum_{i=1}^{n}k_{i},\sum_{i=1}^{n}s_{i}\right)$
is also a symbol of a PDO on another Banach space. This complements
the results in Section \ref{sec:Fokker-Planck-Equations}.

\section{Mathematical Background\label{sec:Mathematical-Background}}

\subsection{\label{sub:Notations}Notations}

A well known method of solving partial differential equations of distributions
$p\left(dx_{1},...,dx_{n};t_{1},...,t_{n}\right)$ on $\mathbb{R}^{n}$
is taking the Fourier Transform(FT) of the distribution with respect
to the spatial variables and then the Laplace Transform(LT) with respect
to the time variables. This is referred to as the Fourier Laplace
Transform(FLT) of $p\left(dx_{1},...,dx_{n};t_{1},...,t_{n}\right)$.
More generally, for $m,n\in\mathbb{N}$ let $f$$\left(dx_{1},...,dx_{m};t_{1},...,t_{n}\right)$
be a finite measure on $\mathbb{R}^{m}$ for every $\mathbf{t}=\left(t_{1},...,t_{n}\right)$
s.t $0<t_{1}\leq\cdots\leq t_{n}$ and assume that $\integral{\mathbf{x}\in A}{}f\left(dx_{1},...,dx_{m};t_{1},...,t_{n}\right)$
is measurable as a function of $\mathbf{t}$ for every measurable
$A\subset\mathbb{R}^{m}$. We denote the FT of $f$ by 
\[
\widetilde{f}\left(k_{1},...,k_{m};t_{1},...,t_{n}\right)=\integral{x_{1}\in\mathbb{R}}{}\cdots\integral{x_{m}\in\mathbb{R}}{}e^{-i\sum_{j=1}^{m}k_{j}x_{j}}f\left(dx_{1},...,dx_{m};t_{1},...,t_{n}\right).
\]
When $f$ has density $f\left(x_{1},...,x_{m};t_{1},...,t_{n}\right)$
we denote the LT of $f$ by 
\[
\hat{f}\left(x_{1},...,x_{m};s_{1},...,s_{n}\right)=\integral{t_{1}=0}{\infty}\cdots\integral{t_{n}=0}{\infty}e^{-\sum_{j=1}^{n}s_{j}t_{j}}f\left(x_{1},...,x_{m};t_{1},...,t_{n}\right)dt_{1}\cdots dt_{n}.
\]
The FLT of $f$ is 
\[
\bar{f}\left(k_{1},...,k_{m};s_{1},...,s_{n}\right)=\integral{t_{1}=0}{\infty}\cdots\integral{t_{n}=0}{\infty}\integral{x_{1}\in\mathbb{R}}{}\cdots\integral{x_{n}\in\mathbb{R}}{}e^{-i\sum_{j=1}^{m}k_{j}x_{j}-\sum_{j=1}^{n}s_{j}t_{j}}f\left(dx_{1},...,dx_{m};t_{1},...,t_{n}\right)dt_{1}\cdots dt_{n}.
\]
We also denote by $\tilde{f}$ the FT of $f$ with respect to some
of its spatial variables, therefore, $\widetilde{f}\left(dx_{1},k_{2};t_{1},t_{2}\right)$
is the FT of $f$ w.r.t $x_{2}$. Similarly, $\hat{f}\left(dx_{1},dx_{2};s_{1},t_{2}\right)$
is the LT of $f$ w.r.t $t_{1}$ and $\bar{f}\left(k_{1},dx_{2};s_{1},t_{2}\right)$
is the FLT of $f$ w.r.t $x_{1}$ and $t_{1}$. When using the hat
symbol is cumbersome we also use $\hat{f}=\mathscr{L}\left(f\right)$.We
occasionally use bold font to represent the vector $\mathbf{x}=\left(x_{1},...,x_{n}\right)$
where the size of the vector is clear.

\subsection{\label{sub:Caputo-and-Riemann-Liouville}Caputo and Riemann-Liouville
Fractional Derivatives}

The Riemann-Liouville(RL) fractional derivative of index $0<\alpha<1$
is given by
\begin{equation}
\mathfrak{D}_{t}^{\alpha}f\left(t\right)=\frac{\partial}{\partial t}\frac{1}{\Gamma\left(1-\alpha\right)}\integral 0t\left(t-r\right)^{-\alpha}f\left(r\right)dr,\label{eq:RL fractional derivative}
\end{equation}
for a suitable function $f$ defined on $\mathbb{R}_{+}$. When the
variable with respect to which we take the derivative is obvious we
drop the subscript and just write $\mathbb{\mathfrak{D}}^{\alpha}f\left(t\right)$.
It can be verified that the LT of (\ref{eq:RL fractional derivative})
is
\[
\widehat{\mathbb{\mathfrak{D}}^{\alpha}f}\left(s\right)=s^{\alpha}\widehat{f}\left(s\right).
\]
Hence, the RL derivative is a PDO of symbol $s^{\alpha}$. Caputo's
derivative is obtained by moving the derivative in (\ref{eq:RL fractional derivative})
under the integral to obtain
\begin{equation}
\mathbb{D}_{t}^{\alpha}f\left(t\right)=\frac{1}{\Gamma\left(1-\alpha\right)}\integral 0t\left(t-r\right)^{-\alpha}\frac{\partial}{\partial r}f\left(r\right)dr.\label{eq:Caputo's fractional derivative}
\end{equation}
The LT of (\ref{eq:Caputo's fractional derivative}) is 
\[
\widehat{\mathfrak{\mathbb{D}}^{\alpha}f}\left(s\right)=s^{\alpha}\widehat{f}\left(s\right)-s^{\alpha-1}f\left(0^{+}\right).
\]
We denote the classic derivative by $\frac{\partial}{\partial t}=\mathbb{D}^{1}$,
and note that $\mathbb{D}^{1}=\mathfrak{D}^{1}$ iff $f\left(0^{+}\right)=0$.
For simplicity we drop the superscript and write $\frac{\partial}{\partial t}=\mathbb{\mathcal{\mathbb{D}}}$(or
$\frac{\partial}{\partial t}=\mathfrak{D}$ when that is the case).

\subsection{\label{sub:Pseudo-differential-operators-of}Pseudo-differential
operators of multivariable functions}

Here we investigate the PDOs acting on measures $f\left(dx_{1},...,dx_{n}\right)$
on $\mathbb{R}_{+}^{n}$ with support in $A^{n}=\left\{ \mathbf{x}:0\leq x_{1}\leq x_{2}\leq...\leq x_{n}\right\} $
with LT $\hat{f}$. Let $\mathbf{k}=\left(k_{1},...,k_{l}\right)$
be a strictly increasing l-tuple where $1\leq k_{i}\leq n$ for $1\leq i\leq l\leq n$
and s.t $k_{1}=1$. We shall sometimes abuse notation and write $i\in\mathbf{k}$
where we mean that $i=k_{j}$ for some $1\leq j\leq l$ .We also write
$\mathbf{k}^{c}$ for the increasing vector s.t $i\in\mathbf{k}^{c}$
iff $2\leq i\leq n$ and $i\notin\mathbf{k}$. If \textbf{$\mathbf{x}$}
is a vector of length $n$ we write $\mathbf{x_{\mathbf{k}}}$ for
the vector of length $l$ whose i'th element is $x_{k_{i}}$. Let
$A_{\mathbf{k}}^{n}$ be the set of all $\mathbf{x}\in A^{n}$ s.t
$x_{i-1}<x_{i}$ iff $i\in\mathbf{k}$ and where $x_{0}=0$. For example,
for $n=3$ $A_{\left(1,2\right)}^{3}=\left\{ \mathbf{x}:0<x_{1}<x_{2}=x_{3}\right\} $.
Since our interest in these distributions comes from the FDDs of the
process $E_{t}$, i.e. $h\left(dx_{1},...,dx_{n};t_{1},...,t_{n}\right)=P\left(E_{t_{1}}\in dx_{1},...,E_{t_{n}}\in dx_{n}\right)$
we also assume in this subsection that $f\left(dx_{1},...,dx_{n}\right)$
can be written as $f\left(dx_{1},...,dx_{n}\right)=f\left(x_{k_{1}},...,x_{k_{l}}\right)\delta_{k_{1}^{c}-1}\left(dx_{k_{1}^{c}}\right)\times...\times\delta_{k_{n-l}^{c}-1}\left(dx_{k_{n-l}^{c}}\right)dx_{k_{1}}\cdots dx_{k_{l}}$
where $f\left(x_{k_{1}},...,x_{k_{l}}\right)$ is absolutely continuous(a.c)
in in each of its variables, i.e. $x_{k_{i}}\rightarrow f\left(x_{1},...,x_{n}\right)$
is a.c with respect to Lebesgue measure on $\mathbb{R}$ for each
$1\leq i\leq l$. We occasionally refer to such $f$ as a.c, not to
be confused with the concept of a.c measure on $\mathbb{R}^{n}$.
We abbreviate by writing 
\begin{equation}
f_{\mathbf{k}}\left(d\mathbf{x}\right)=f\left(\mathbf{x}_{\mathbf{k}}\right)\delta_{\mathbf{k}^{c}-1}\left(d\mathbf{x}_{\mathbf{k}^{c}}\right)d\mathbf{x}_{\mathbf{k}},\label{eq:Abrr. form}
\end{equation}
so that $\mathbf{k}$ points out the indices for which $f$ has absolutely
continuous density. For example, $f_{\left(1,2,4\right)}\left(dx_{1},dx_{2},dx_{3},dx_{4}\right)$
can be written as $f\left(x_{1},x_{2},x_{4}\right)\delta_{x_{2}}\left(dx_{3}\right)dx_{1}dx_{2}dx_{4}$.
To motivate this assumption cf. (\ref{eq:transition of H}) and note
that by (\ref{eq:Finite dimensional distribution general}) $h\left(dx_{1},...,dx_{n};t_{1},...,t_{n}\right)$
is of the form $f\left(\mathbf{x}_{\mathbf{k}}\right)$ on $A_{\mathbf{k}}^{n}$.
The set $A_{\mathbf{k}}^{n}$ is a manifold of dimension $l$, and
represents the event where the process $E_{t}$ has been stuck at
the point $x_{i}$ since the time $t_{i-1}$ to $t_{i}$ for $i\notin\mathbf{k}$.
For example, $A_{\left(1,3\right)}^{4}$ represents the event $\left\{ E_{t_{1}}=x_{1}\in\left(0,\infty\right),E_{t_{2}}=x_{1},E_{t_{3}}=x_{3}\in\left(x_{1},\infty\right),E_{t_{4}}=x_{3}\right\} $,
and it helps to think of $\mathbf{k}$ as the indices of mobilized
points of the particle. Let us define a derivative operator on $f_{\mathbf{k}}$
distributions. We define the derivative operator to be
\[
\mathbb{D}_{\mathbf{x}}f_{\mathbf{k}}\left(d\mathbf{x}\right)=\sum_{i=1}^{l}\frac{\partial}{\partial x_{k_{i}}}f\left(\mathbf{x}_{\mathbf{k}}\right)\delta_{\mathbf{k}^{c}-1}\left(d\mathbf{x}_{\mathbf{k}^{c}}\right)d\mathbf{x}_{\mathbf{k}}.
\]
For example, if $f_{\left(1,2\right)}\left(d\mathbf{x}\right)=f\left(x_{1},x_{2}\right)\delta_{x_{2}}\left(dx_{3}\right)dx_{1}dx_{2}$
then 
\begin{alignat*}{1}
\mathfrak{\mathbb{D}}_{\mathbf{x}}f_{\left(1,2\right)}\left(d\mathbf{x}\right) & =\frac{\partial}{\partial x_{1}}f_{\left(1,2\right)}\left(x_{1},x_{2}\right)\delta_{x_{2}}\left(dx_{3}\right)dx_{1}dx_{2}\\
 & +\frac{\partial}{\partial x_{2}}f_{\left(1,2\right)}\left(x_{1},x_{2}\right)\delta_{x_{2}}\left(dx_{3}\right)dx_{1}dx_{2}.
\end{alignat*}
Note that $\mathfrak{D}_{\mathbf{x}}$ is well defined as we assume
that $f_{\mathbf{k}}$ has a.c density in $x_{i}$ for $i\in\mathbf{k}$.
We also assume that $\underset{x_{k_{l}}\rightarrow\infty}{\lim}e^{-x_{k_{l}}}f\left(x_{k_{1}},\ldots,x_{k_{l}}\right)=0$
where $f$ is as in (\ref{eq:Abrr. form}). This is not a strong assumption
as $f$ has LT. 
\begin{lem}
\label{lem:LT of the derivative of multivariable function}Let $f_{\mathbf{k}}$
be such that $l=n$. Then the LT of $\mathfrak{\mathbb{D}}_{\mathbf{x}}f\left(\mathbf{x}\right)$
is 
\begin{equation}
\widehat{\mathfrak{\mathbb{D}}_{\mathbf{x}}f_{\mathbf{k}}}\left(\mathbf{s}\right)=\left(\sum_{i=1}^{n}s_{i}\right)\widehat{f}_{\mathbf{k}}\left(s_{1},\ldots s_{n}\right)-\underset{x_{1}\rightarrow0^{+}}{\lim}\hat{f_{\mathbf{k}}}\left(x_{1},s_{2},...,s_{n}\right).\label{eq:LT of the derivative of multi variable function}
\end{equation}
 \end{lem}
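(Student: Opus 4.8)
\emph{Proof idea.} The plan is to straighten out the simplex $A^{n}$ and the diagonal direction by a unit-Jacobian change of variables, after which $\mathbb{D}_{\mathbf{x}}$ becomes an ordinary partial derivative and the assertion reduces to a one-dimensional integration by parts. Since $l=n$ we have $\mathbf{k}=(1,\dots,n)$, there is no Dirac component, and by the abbreviation introduced above $f_{\mathbf{k}}(d\mathbf{x})=f(\mathbf{x})\,d\mathbf{x}$ with $f$ absolutely continuous in each variable and supported on $A^{n}=\{0\le x_{1}\le\cdots\le x_{n}\}$. I would set $u_{1}=x_{1}$ and $u_{i}=x_{i}-x_{i-1}$ for $2\le i\le n$; this is a bijection of $A^{n}$ onto $\mathbb{R}_{+}^{n}$ with Jacobian $1$, and writing $S_{j}:=\sum_{i=j}^{n}s_{i}$ one checks $\sum_{i=1}^{n}s_{i}x_{i}=\sum_{j=1}^{n}S_{j}u_{j}$. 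The crucial point is that $x_{j}=u_{1}+\cdots+u_{j}$ depends on $u_{1}$ with $\partial x_{j}/\partial u_{1}=1$ for every $j$, so that for $\tilde{f}(\mathbf{u}):=f\bigl(x_{1}(\mathbf{u}),\dots,x_{n}(\mathbf{u})\bigr)$ we get $\sum_{i=1}^{n}\partial f/\partial x_{i}=\partial\tilde{f}/\partial u_{1}$; in other words $\mathbb{D}_{\mathbf{x}}$ is differentiation in the one coordinate direction transverse to the face $\{x_{1}=0\}$ and parallel to every face $\{x_{i}=x_{i-1}\}$.

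Consequently $\widehat{\mathbb{D}_{\mathbf{x}}f_{\mathbf{k}}}(\mathbf{s})=\int_{\mathbb{R}_{+}^{n}}e^{-\sum_{j}S_{j}u_{j}}\,\partial_{u_{1}}\tilde{f}(\mathbf{u})\,d\mathbf{u}$. I would then integrate by parts in $u_{1}$ alone, the use of Fubini being legitimate because $\mathbb{D}_{\mathbf{x}}f_{\mathbf{k}}$ is assumed to possess a convergent Laplace transform. The boundary term at $u_{1}=\infty$ equals $\lim_{u_{1}\to\infty}e^{-S_{1}u_{1}}\tilde{f}(\mathbf{u})$, which vanishes: $x_{n}\ge u_{1}$ and, by the standing hypothesis, $e^{-x_{n}}f(\mathbf{x})\to0$, so for $\mathbf{s}$ in the domain of convergence ($\Re S_{1}>1$) the factor $e^{-S_{1}u_{1}}$ dominates. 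This leaves $S_{1}\int_{\mathbb{R}_{+}^{n}}e^{-\sum_{j}S_{j}u_{j}}\tilde{f}\,d\mathbf{u}=\bigl(\sum_{i=1}^{n}s_{i}\bigr)\hat{f}_{\mathbf{k}}(\mathbf{s})$, together with the boundary term at $u_{1}=0$, namely $-\int_{\mathbb{R}_{+}^{n-1}}e^{-\sum_{j\ge2}S_{j}u_{j}}\,\tilde{f}(0,u_{2},\dots,u_{n})\,du_{2}\cdots du_{n}$.

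It then remains to recognize this last integral as $\lim_{x_{1}\to0^{+}}\hat{f}_{\mathbf{k}}(x_{1},s_{2},\dots,s_{n})$. In the partial Laplace transform $\hat{f}_{\mathbf{k}}(x_{1},s_{2},\dots,s_{n})=\int_{x_{1}\le x_{2}\le\cdots\le x_{n}}e^{-\sum_{j\ge2}s_{j}x_{j}}f(\mathbf{x})\,dx_{2}\cdots dx_{n}$ the same substitution $u_{j}=x_{j}-x_{j-1}$ for $j\ge2$ (so $x_{j}=x_{1}+u_{2}+\cdots+u_{j}$) gives $e^{-x_{1}S_{2}}\int_{\mathbb{R}_{+}^{n-1}}e^{-\sum_{j\ge2}S_{j}u_{j}}f(x_{1},x_{1}+u_{2},\dots,x_{1}+u_{2}+\cdots+u_{n})\,du_{2}\cdots du_{n}$, and letting $x_{1}\to0^{+}$ — using continuity in $x_{1}$, which holds since $f$ is absolutely continuous in $x_{1}$, and dominated convergence justified by convergence of the Laplace transform — yields exactly $\int_{\mathbb{R}_{+}^{n-1}}e^{-\sum_{j\ge2}S_{j}u_{j}}\tilde{f}(0,u_{2},\dots,u_{n})\,du_{2}\cdots du_{n}$. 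Assembling the three contributions gives~\eqref{eq:LT of the derivative of multi variable function}. The only genuinely delicate points are the two passages to the limit — vanishing of the boundary term at infinity and interchange of $\lim_{x_{1}\to0^{+}}$ with the integral — and both are controlled by the standing assumptions (absolute continuity of $f$, the bound $e^{-x_{n}}f\to0$, and existence of the relevant Laplace transforms); everything else is bookkeeping.
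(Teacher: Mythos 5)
Your argument is correct, and it takes a genuinely different route from the paper's. The paper fixes $i$, integrates by parts in $x_{i}$ alone (with the inner integral effectively running from $x_{i-1}$ to $x_{i+1}$ because of the support condition), and then sums over $i$: the resulting boundary terms on the diagonal faces $\left\{ x_{i}=x_{i\pm1}\right\} $ telescope pairwise, the term at infinity dies by the hypothesis $e^{-x_{n}}f\rightarrow0$, and only the $i=1$ lower boundary term survives, yielding $-\lim_{x_{1}\rightarrow0^{+}}\hat{f}_{\mathbf{k}}\left(x_{1},s_{2},...,s_{n}\right)$. You instead straighten the simplex first via $u_{1}=x_{1}$, $u_{i}=x_{i}-x_{i-1}$, under which $\mathbb{D}_{\mathbf{x}}$ becomes $\partial/\partial u_{1}$ and $\sum_{i}s_{i}x_{i}=\sum_{j}S_{j}u_{j}$, and then perform a single integration by parts in $u_{1}$; the telescoping cancellation is absorbed into the change of variables, and the $u_{1}=0$ boundary term is identified with the stated limit by the same substitution applied to the partial transform. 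Your substitution is essentially the map $\mathcal{T}$ that the paper only introduces later, in Section \ref{sec:Pseudo-Differential-Operators-on}, to exhibit $\mathbb{D}_{\mathbf{x}}$ as a directional operator; your proof therefore makes that structural point already at the level of this lemma and avoids the bookkeeping of $n$ separate boundary evaluations, at the cost of having to verify the two limit interchanges (vanishing at $u_{1}=\infty$ and continuity at $x_{1}=0^{+}$) explicitly, which you do under the same standing assumptions the paper uses. The only cosmetic caveat is that your domination argument for the boundary term at infinity, as for the paper's, implicitly restricts $\mathbf{s}$ to the half-space where the transform converges; this is consistent with the paper's usage.
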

\begin{proof}
In the following, we use $\check{a_{i}}$ to indicate that $a_{i}$
is absent from where it normally should be. Since here $f_{\mathbf{k}}\left(d\mathbf{x}\right)=f\left(\mathbf{x}\right)d\mathbf{x}$,
for $1\leq i\leq n$ we have
\begin{alignat}{1}
 & \integral{x_{1}=0}{\infty}\integral{x_{2}=0}{\infty}\cdots\integral{x_{n}=0}{\infty}e^{-\mathbf{\left\langle s,\mathbf{x}\right\rangle }}\frac{\partial f\left(\mathbf{x}\right)}{\partial x_{i}}d\mathbf{x}\label{eq:LT of multivariable function drivative}\\
 & =\integral{x_{1}=0}{\infty}\cdots\check{\integral{x_{i}=0}{\infty}}\cdots\integral{x_{n}=0}{\infty}e^{-s_{1}x_{1}\cdots-\check{s_{i}x_{i}}\cdots-s_{n}x_{n}}\left[\integral{x_{i}=0}{\infty}e^{-s_{i}x_{i}}\frac{\partial f\left(\mathbf{x}\right)}{\partial x_{i}}dx_{i}\right]dx_{1}\cdots\check{dx_{i}}\cdots dx_{n}\nonumber \\
 & =\integral{x_{1}=0}{\infty}\cdots\check{\integral{x_{i}=0}{\infty}}\cdots\integral{x_{n}=0}{\infty}e^{-s_{1}x_{1}\cdots-\check{s_{i}x_{i}}\cdots-s_{n}x_{n}}\left[\left.e^{-s_{i}x_{i}}f\left(\mathbf{x}\right)\right|_{\left(x_{1},...,x_{i-1},x_{i-1},x_{i+1},...,x_{n}\right)}^{\left(x_{1},...,x_{i-1},x_{i+1},x_{i+1},...,x_{n}\right)}\right.\nonumber \\
 & \phantom{=}\left.+s_{i}\integral{x_{i}=0}{\infty}e^{-s_{i}x_{i}}f\left(\mathbf{x}\right)dx_{i}\right]dx_{1}\cdots\check{dx_{i}}\cdots dx_{n}\nonumber \\
 & =\integral{x_{1}=0}{\infty}\cdots\check{\integral{x_{i}=0}{\infty}}\cdots\integral{x_{n}=0}{\infty}e^{-s_{1}x_{1}\cdots-\check{s_{i}x_{i}}\cdots-s_{n}x_{n}}\left[e^{-s_{i}x_{i+1}}f\left(x_{1}\ldots,x_{i-1},\underset{\text{i'th coordinate}}{\underbrace{x_{i+1}}},x_{i+1}...,x_{n}\right)\right.\nonumber \\
 & \phantom{=}\left.-e^{-s_{i}x_{i-1}}f\left(x_{1}\ldots,x_{i-1},\underset{\text{i'th coordinate}}{\underbrace{x_{i-1}}},x_{i+1}...,x_{n}\right)+s_{i}\integral{x_{i}=0}{\infty}e^{-s_{i}x_{i}}f\left(x_{1},x_{2},...,x_{n}\right)dx_{i}\right]dx_{1}\cdots\check{dx_{i}}\cdots dx_{n}\nonumber \\
 & =\integral{x_{i+1}=0}{\infty}e^{-\left(s_{i}+s_{i+1}\right)x_{i+1}}\hat{f}\left(s_{1}\ldots,s_{i-1},\underset{\text{i'th coordinate}}{\underbrace{x_{i+1}}},x_{i+1},s_{i+2}...,s_{n}\right)dx_{i+1}\nonumber \\
 & \phantom{=}-\integral{x_{i-1}=0}{\infty}e^{-\left(s_{i}+s_{i-1}\right)x_{i-1}}\hat{f}\left(s_{1}\ldots,x_{i-1},\underset{\text{i'th coordinate}}{\underbrace{x_{i-1}}},s_{i+1},s_{i+2}...,s_{n}\right)dx_{i-1}+s_{i}\hat{f}\left(s_{1},s_{2},...,s_{n}\right)\nonumber 
\end{alignat}
Note that since $\underset{x_{n}\rightarrow\infty}{\lim}e^{-x_{n}}f\left(x_{1},\ldots,x_{n}\right)=0$,
summing over the variable $i$ the first two terms in the last equality
in (\ref{eq:LT of multivariable function drivative}) cancel out for
every $i\neq1$. For $i=1$ only the second term in the brackets cancels
out and the result follows. \end{proof}
\begin{lem}
\label{lem:LT of f_k}The LT of $\mathfrak{\mathbb{D}}_{\mathbf{x}}f_{\mathbf{k}}\left(\mathbf{x}\right)$
is 
\[
\widehat{\mathfrak{\mathbb{D}}_{\mathbf{x}}f_{\mathbf{k}}}\left(\mathbf{s}\right)=\left(\sum_{i=1}^{n}s_{i}\right)\widehat{f}_{\mathbf{k}}\left(\mathbf{s}\right)-\underset{x_{1}\rightarrow0^{+}}{\lim}\hat{f}_{\mathbf{k}}\left(x_{1},s_{2},...,s_{n}\right)
\]
\end{lem}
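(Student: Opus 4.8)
The plan is to reduce the statement to the special case $l=n$, which is Lemma~\ref{lem:LT of the derivative of multivariable function}, by integrating out the Dirac factors that appear in $f_{\mathbf{k}}$. Set $k_{l+1}=n+1$ and, for $1\le j\le l$, put $\sigma_j=\sum_{i=k_j}^{k_{j+1}-1}s_i$. Because $f_{\mathbf{k}}(d\mathbf{x})=f(\mathbf{x}_{\mathbf{k}})\,\delta_{\mathbf{k}^{c}-1}(d\mathbf{x}_{\mathbf{k}^{c}})\,d\mathbf{x}_{\mathbf{k}}$, each coordinate $x_i$ with $i\in\mathbf{k}^{c}$ is pinned to $x_{i-1}$; collapsing a consecutive block $k_j,k_j+1,\dots,k_{j+1}-1$ of such constraints leaves the single free variable $y_j:=x_{k_j}$, the exponent $\langle\mathbf{s},\mathbf{x}\rangle$ becomes $\sum_{j=1}^{l}\sigma_j y_j$, and the support $A_{\mathbf{k}}^{n}$ becomes $\{0<y_1<\dots<y_l\}$, i.e.\ (the interior of) $A^{l}$. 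Hence
\[
\widehat{f}_{\mathbf{k}}(\mathbf{s})=\int_{0<y_1<\cdots<y_l}e^{-\sum_{j=1}^{l}\sigma_j y_j}\,f(y_1,\dots,y_l)\,dy_1\cdots dy_l=:g(\sigma_1,\dots,\sigma_l),
\]
the $l$-dimensional Laplace transform of $f$ evaluated at $(\sigma_1,\dots,\sigma_l)$. Since $\mathbb{D}_{\mathbf{x}}$ acts only through the operators $\partial/\partial x_{k_i}$ and does not touch the $\delta$-factors, the identical collapse gives $\widehat{\mathbb{D}_{\mathbf{x}}f_{\mathbf{k}}}(\mathbf{s})=\sum_{i=1}^{l}\widehat{(\partial_{y_i}f)}(\sigma_1,\dots,\sigma_l)$, which is exactly the quantity treated in Lemma~\ref{lem:LT of the derivative of multivariable function} with $n$ replaced by $l$, $s_i$ by $\sigma_i$, and $A^{n}$ by $A^{l}$. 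The growth hypothesis needed there, $\lim_{y_l\to\infty}e^{-y_l}f=0$, is precisely the standing assumption imposed on $f$ in this subsection.

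Applying Lemma~\ref{lem:LT of the derivative of multivariable function} in this reduced form yields
\[
\widehat{\mathbb{D}_{\mathbf{x}}f_{\mathbf{k}}}(\mathbf{s})=\Bigl(\sum_{i=1}^{l}\sigma_i\Bigr)g(\sigma_1,\dots,\sigma_l)-\lim_{y_1\to0^{+}}g(y_1,\sigma_2,\dots,\sigma_l),
\]
where $g(y_1,\sigma_2,\dots,\sigma_l)$ denotes the Laplace transform of $f$ in the variables $y_2,\dots,y_l$ only. The blocks partition $\{1,\dots,n\}$, so $\sum_{i=1}^{l}\sigma_i=\sum_{i=1}^{n}s_i$, and $g(\sigma_1,\dots,\sigma_l)=\widehat{f}_{\mathbf{k}}(\mathbf{s})$ by the display above; this produces the first term of the claimed formula. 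It remains only to identify the boundary term $\lim_{y_1\to0^{+}}g(y_1,\sigma_2,\dots,\sigma_l)$ with $\lim_{x_1\to0^{+}}\widehat{f}_{\mathbf{k}}(x_1,s_2,\dots,s_n)$.

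For that last point I would run the same collapse of $\delta$-factors but leave $x_1$ untransformed. The coordinates $x_2,\dots,x_{k_2-1}$ of the first block are pinned to $x_1$ yet still contribute $s_2,\dots,s_{k_2-1}$ to the exponent, so one obtains $\widehat{f}_{\mathbf{k}}(x_1,s_2,\dots,s_n)=e^{-(\sigma_1-s_1)x_1}\,g(x_1,\sigma_2,\dots,\sigma_l)$; since $e^{-(\sigma_1-s_1)x_1}\to1$ as $x_1\to0^{+}$, the two limits agree. This bookkeeping of the extra exponential factor is the only delicate point—it explains why the conclusion is literally the same as in Lemma~\ref{lem:LT of the derivative of multivariable function} even though the partially transformed objects differ by $e^{-(\sigma_1-s_1)x_1}$. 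In the degenerate case where the first block is $\{1\}$ (i.e.\ $k_2=2$, $\sigma_1=s_1$) there is no such factor and the identification is immediate, so no separate argument is needed there.
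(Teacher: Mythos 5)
Your proposal is correct and follows essentially the same route as the paper's proof: integrate out the $\delta$-factors so that $\widehat{f}_{\mathbf{k}}(\mathbf{s})$ becomes the $l$-dimensional Laplace transform of $f$ evaluated at the block sums $\sigma_j=\sum_{i=k_j}^{k_{j+1}-1}s_i$, and then invoke Lemma \ref{lem:LT of the derivative of multivariable function} with $n$ replaced by $l$. Your explicit bookkeeping of the factor $e^{-(\sigma_1-s_1)x_1}$ in identifying the boundary term is a point the paper glosses over ("the result follows"), and it is handled correctly.
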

\begin{proof}
Taking the LT of $f_{\mathbf{k}}\left(dx_{1},...,dx_{n}\right)$ first
w.r.t the indices that are not in $\mathbf{k}$ we see that 
\begin{equation}
\widehat{\mathfrak{\mathbb{D}}_{\mathbf{x}}f_{\mathbf{k}}}\left(\mathbf{x_{\mathbf{k}}},\mathbf{s}_{\mathbf{k}^{c}}\right)=\integral{\mathbb{R}_{+}^{n-l}}{}e^{-\sum_{i\in\mathbf{k}^{c}}s_{i}x_{i}}\mathbb{D}_{\mathbf{x}}f_{\mathbf{k}}\left(\mathbf{x}\right),\label{eq:f_k LT}
\end{equation}
 and can be written as 
\[
\widehat{\mathfrak{\mathbb{D}}_{\mathbf{x}}f_{\mathbf{k}}}\left(\mathbf{x_{\mathbf{k}}},\mathbf{s}_{\mathbf{k}^{c}}\right)=\mathfrak{\mathbb{D}}_{\mathbf{x}}f\left(\mathbf{x}_{\mathbf{k}}\right)e^{-\sum_{i=1}^{l-1}\left(\sum_{j=k_{i}+1}^{k_{i+1}-1}s_{j}\right)x_{k_{i}}-\sum_{j=k_{l}+1}^{n}s_{j}x_{k_{l}}},
\]
 where $f$ is an a.c function. It follows that
\begin{alignat*}{1}
\widehat{\mathfrak{\mathbb{D}}_{\mathbf{x}}f_{\mathbf{k}}}\left(\mathbf{s}\right) & =\integral{\mathbb{R}_{+}^{l}}{}e^{-\sum_{i\in\mathbf{k}}s_{i}x_{i}}\widehat{\mathfrak{\mathbb{D}}_{\mathbf{x}}f_{\mathbf{k}}}\left(\mathbf{x_{\mathbf{k}}},\mathbf{s}_{\mathbf{k}^{c}}\right)d\mathbf{x_{k}}\\
 & =\integral{\mathbb{R}_{+}^{l}}{}e^{-\sum_{i\in\mathbf{k}}s_{i}x_{i}}\mathfrak{\mathbb{D}}_{\mathbf{x}}f\left(\mathbf{x}_{\mathbf{k}}\right)e^{-\sum_{i=1}^{l-1}\left(\sum_{j=k_{i}+1}^{k_{i+1}-1}s_{j}\right)x_{k_{i}}-\sum_{j=k_{l}+1}^{n}s_{j}x_{k_{l}}}d\mathbf{x}_{\mathbf{k}}\\
 & =\integral{\mathbb{R}_{+}^{l}}{}e^{-\sum_{i=1}^{l-1}\left(\sum_{j=k_{i}}^{k_{i+1}-1}s_{j}\right)x_{k_{i}}-\sum_{j=k_{l}}^{n}s_{j}x_{k_{l}}}\mathbb{D}_{\mathbf{x}}f\left(\mathbf{x}_{\mathbf{k}}\right)d\mathbf{x}_{\mathbf{k}}.
\end{alignat*}
By Lemma \ref{lem:LT of the derivative of multivariable function}
for $n=l$ we see that
\begin{alignat*}{1}
\widehat{\mathfrak{\mathbb{D}}_{\mathbf{x}}f_{\mathbf{k}}}\left(\mathbf{s}\right) & =\left(\sum_{i=1}^{n}s_{i}\right)\integral{\mathbb{R}_{+}^{l}}{}e^{-\sum_{i=1}^{l-1}\left(\sum_{j=k_{i}}^{k_{i+1}-1}s_{j}\right)x_{k_{i}}-\sum_{j=k_{l}}^{n}s_{j}x_{k_{l}}}f\left(\mathbf{x}_{\mathbf{k}}\right)d\mathbf{x}_{\mathbf{k}}\\
 & -\lim_{x_{1}\rightarrow0^{+}}\integral{\mathbb{R}_{+}^{l-1}}{}e^{-\sum_{i=2}^{l-1}\left(\sum_{j=k_{i}}^{k_{i+1}-1}s_{j}\right)x_{k_{i}}-\sum_{j=k_{l}}^{n}s_{j}x_{k_{l}}}f\left(\mathbf{x}_{\mathbf{k}}\right)d\mathbf{x}_{\mathbf{k}'}
\end{alignat*}
where $\mathbf{k}'$ is just the vector of length $l-1$ s.t $k'_{i}=k_{i+1}$
for $1\leq i\leq l-1$. Since 
\begin{alignat*}{1}
\widehat{f_{\mathbf{k}}}\left(\mathbf{s}\right) & =\integral{\mathbb{R}_{+}^{n}}{}e^{-\sum_{i=1}^{n}s_{i}x_{i}}f\left(\mathbf{x}_{\mathbf{k}}\right)\delta_{\mathbf{k}^{c}-1}\left(d\mathbf{x}_{\mathbf{k}^{c}}\right)d\mathbf{x_{\mathbf{k}}}\\
 & =\integral{\mathbb{R}_{+}^{l}}{}e^{-\sum_{i=1}^{l-1}\left(\sum_{j=k_{i}}^{k_{i+1}-1}s_{i}\right)x_{k_{i}}-\sum_{j=k_{l}+1}^{n}s_{j}x_{k_{l}}}f\left(\mathbf{x}_{\mathbf{k}}\right)d\mathbf{x}_{\mathbf{k}},
\end{alignat*}
the result follows.
\end{proof}
If $f\left(\mathbf{x}\right)$ is a differentiable function then $\mathfrak{\mathbb{D}}_{\mathbf{x}}$
is just the directional derivative along the vector $v=(1,...,1)$
of size $n$. Let $\Psi_{x}$ be a PDO on $\mathbb{R}$ with symbol
$\psi(k)$. Then $\psi(\sum_{i=1}^{n}k_{i})$ is a symbol of the PDO
$\Psi_{\mathbf{x}}$ to be defined later and where we use bold $\mathbf{x}$
subscript to emphasize the fact that $\Psi_{\mathbf{x}}$ is defined
on functions on $\mathbb{R}^{n}$. One can think of $\Psi_{\mathbf{x}}$
as the directional version of $\Psi_{x}$ with directional vector
$v=(1,...,1)$, this will be defined rigorously in Section \ref{sec:Pseudo-Differential-Operators-on}. 

Define the RL fractional derivative of index $0<\alpha<1$ of $f\left(\mathbf{x}\right)$
to be
\begin{equation}
\mathfrak{D}_{\mathbf{x}}^{\alpha}f\left(\mathbf{x}\right)=\left(\sum_{i=1}^{n}\frac{\partial}{\partial x_{i}}\right)\integral 0{x_{1}}f\left(x_{1}-r,x_{2}-r,...,x_{n}-r\right)\frac{r^{-\alpha}}{\Gamma\left(1-\alpha\right)}dr.\label{eq:multiparameter directional fractional derivative}
\end{equation}
Once again, Equation (\ref{eq:multiparameter directional fractional derivative})
can be thought of as a fractional directional derivative. \\
As opposed to the one dimensional case where under certain conditions
the derivative w.r.t the time variable is defined on a function $p\left(x;t\right)$,
in the finite dimensional case one can not avoid the fact that $p\left(d\mathbf{x};\mathbf{t}\right)$
is $f_{\mathbf{k}}\left(d\mathbf{x}\right)$ valued on $A_{\mathbf{k}}^{n}$.
In order to describe the dynamics of $p\left(d\mathbf{x};\mathbf{t}\right)$
on $A_{\mathbf{k}}^{n}$ one should extend this notion to the functions
$f_{\mathbf{k}}\left(\mathbf{t}\right)$, where we now use the letter
\textbf{$\mathbf{t}$} in order to emphasize the context of this operator.
Since on $A_{\mathbf{k}}^{n}$ the dynamics on $\mathbf{t}_{\mathbf{k}^{c}}$
are degenerate it is reasonable to apply $\mathbb{D}_{\mathbf{x}}^{\alpha}$
on $\mathbf{t}_{\mathbf{k}}$. More precisely, if $f_{\mathbf{k}}\left(d\mathbf{t}\right)=f\left(\mathbf{t}_{\mathbf{k}}\right)\delta_{\mathbf{t}_{\mathbf{k}}-1}\left(d\mathbf{t}_{\mathbf{k}^{c}}\right)d\mathbf{t}_{\mathbf{k}}$
(here $f\left(\mathbf{t}_{\mathbf{k}}\right)$ need not be a.c) then
we define
\[
\mathfrak{D}_{\mathbf{t}}^{\alpha}f_{\mathbf{k}}\left(d\mathbf{t}\right)\coloneqq\mathbb{\mathbb{D}}_{\mathbf{t}}\left[\integral 0{x_{1}}f\left(x_{k_{1}}-r,...,x_{k_{l}}-r\right)\frac{r^{-\alpha}}{\Gamma\left(1-\alpha\right)}dr\delta_{\mathbf{t}_{\mathbf{k}}-1}\left(d\mathbf{t}_{\mathbf{k}^{c}}\right)\right].
\]
The analogue of Lemma \ref{lem:LT of f_k} is the following.
\begin{lem}
\label{lem: LT of f_k for t}The LT of \textbf{$\mathbb{\mathfrak{D}}_{\mathbf{t}}^{\alpha}f_{\mathbf{k}}\left(d\mathbf{t}\right)$
}is \textup{$\left(\sum_{i=1}^{n}s_{n}\right)^{\alpha}\widehat{f}_{\mathbf{k}}\left(\mathbf{s}\right)$.}\end{lem}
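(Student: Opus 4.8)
The plan is to factor $\mathfrak{D}_{\mathbf{t}}^{\alpha}$ through the directional derivative $\mathbb{D}_{\mathbf{t}}$, whose Laplace transform is already supplied by Lemma~\ref{lem:LT of f_k}, and to compute the effect of the fractional integration separately. Write
\[
g\left(\mathbf{t}_{\mathbf{k}}\right)=\frac{1}{\Gamma\left(1-\alpha\right)}\integral 0{t_{1}}r^{-\alpha}f\left(t_{k_{1}}-r,\ldots,t_{k_{l}}-r\right)dr ,
\]
where $t_{1}=t_{k_{1}}$, so that by definition $\mathfrak{D}_{\mathbf{t}}^{\alpha}f_{\mathbf{k}}\left(d\mathbf{t}\right)=\mathbb{D}_{\mathbf{t}}g_{\mathbf{k}}\left(d\mathbf{t}\right)$ with $g_{\mathbf{k}}\left(d\mathbf{t}\right)=g\left(\mathbf{t}_{\mathbf{k}}\right)\delta_{\mathbf{k}^{c}-1}\left(d\mathbf{t}_{\mathbf{k}^{c}}\right)d\mathbf{t}_{\mathbf{k}}$. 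Since the fractional integral shifts all mobilized coordinates by the same $r$, the degenerate $\mathbf{k}^{c}$-structure is left untouched and $g_{\mathbf{k}}$ is again of the form (\ref{eq:Abrr. form}); moreover $g$ is absolutely continuous in the mobilized coordinates (the fractional integral of order $1-\alpha>0$ regularizes), so that Lemma~\ref{lem:LT of f_k} applies to $g_{\mathbf{k}}$.

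Next I would show that $\widehat{g}_{\mathbf{k}}\left(\mathbf{s}\right)=\bigl(\sum_{j=1}^{n}s_{j}\bigr)^{\alpha-1}\widehat{f}_{\mathbf{k}}\left(\mathbf{s}\right)$. Transforming first in the $\mathbf{k}^{c}$-coordinates and then in the $\mathbf{k}$-coordinates, exactly as in the proof of Lemma~\ref{lem:LT of f_k}, rewrites $\widehat{g}_{\mathbf{k}}\left(\mathbf{s}\right)$ as the $l$-dimensional Laplace transform of $g\left(\mathbf{t}_{\mathbf{k}}\right)$ in the block variables $\sigma_{i}=\sum_{j=k_{i}}^{k_{i+1}-1}s_{j}$ (convention $k_{l+1}-1:=n$), and the same reduction turns the $l$-dimensional transform of $f\left(\mathbf{t}_{\mathbf{k}}\right)$ in those $\sigma_{i}$ into $\widehat{f}_{\mathbf{k}}\left(\mathbf{s}\right)$. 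Because $k_{1}=1$, the blocks $\{k_{i},\ldots,k_{i+1}-1\}$ partition $\{1,\ldots,n\}$, so $\sum_{i=1}^{l}\sigma_{i}=\sum_{j=1}^{n}s_{j}$. In this reduced picture the substitution $v_{i}=t_{k_{i}}-r$ has Jacobian $1$ and, since $f$ vanishes off $\mathbb{R}_{+}^{l}$, it factors the integral into $\bigl(\tfrac{1}{\Gamma(1-\alpha)}\integral 0{\infty}e^{-(\sum_{i}\sigma_{i})r}r^{-\alpha}\,dr\bigr)$ times the $l$-dimensional transform of $f$; using $\integral 0{\infty}e^{-sr}r^{-\alpha}\,dr=\Gamma(1-\alpha)\,s^{\alpha-1}$ this is $\bigl(\sum_{j}s_{j}\bigr)^{\alpha-1}\widehat{f}_{\mathbf{k}}\left(\mathbf{s}\right)$. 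For $f\ge0$ — the case of interest, $f$ being a density — the interchange of integrals is Tonelli; in general it is justified by the standing integrability hypotheses on $f$.

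Finally, applying Lemma~\ref{lem:LT of f_k} to $g_{\mathbf{k}}$ in place of $f_{\mathbf{k}}$ gives
\[
\widehat{\mathfrak{D}_{\mathbf{t}}^{\alpha}f_{\mathbf{k}}}\left(\mathbf{s}\right)=\widehat{\mathbb{D}_{\mathbf{t}}g_{\mathbf{k}}}\left(\mathbf{s}\right)=\Bigl(\sum_{i=1}^{n}s_{i}\Bigr)\widehat{g}_{\mathbf{k}}\left(\mathbf{s}\right)-\lim_{t_{1}\rightarrow0^{+}}\widehat{g}_{\mathbf{k}}\left(t_{1},s_{2},\ldots,s_{n}\right) ,
\]
and the boundary term vanishes: since $\alpha<1$, $r^{-\alpha}$ is integrable at $0$, hence $\bigl|g\left(\mathbf{t}_{\mathbf{k}}\right)\bigr|\le C\,t_{1}^{1-\alpha}\rightarrow0$ as $t_{1}=t_{k_{1}}\rightarrow0^{+}$ locally uniformly in the remaining variables, so $g_{\mathbf{k}}\left(t_{1},\cdot\right)\rightarrow0$ and with it its partial Laplace transform. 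Combining the three displays yields $\widehat{\mathfrak{D}_{\mathbf{t}}^{\alpha}f_{\mathbf{k}}}\left(\mathbf{s}\right)=\bigl(\sum_{i=1}^{n}s_{i}\bigr)\bigl(\sum_{i=1}^{n}s_{i}\bigr)^{\alpha-1}\widehat{f}_{\mathbf{k}}\left(\mathbf{s}\right)=\bigl(\sum_{i=1}^{n}s_{i}\bigr)^{\alpha}\widehat{f}_{\mathbf{k}}\left(\mathbf{s}\right)$, as claimed. The step I expect to need the most care is the middle one: keeping the block-variable bookkeeping straight so that the single ``directional'' Laplace variable produced by the substitution is exactly $\sum_{i}\sigma_{i}=\sum_{j=1}^{n}s_{j}$, and confirming that $g$ is absolutely continuous so that Lemma~\ref{lem:LT of f_k} is genuinely in force.
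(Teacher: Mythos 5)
Your proposal is correct and follows essentially the same route as the paper: factor $\mathfrak{D}_{\mathbf{t}}^{\alpha}$ as $\mathbb{D}_{\mathbf{t}}$ applied to the fractional integral, show that the fractional integral has Laplace transform $\bigl(\sum_{i}s_{i}\bigr)^{\alpha-1}\widehat{f}_{\mathbf{k}}\left(\mathbf{s}\right)$ and vanishing boundary term as $t_{1}\rightarrow0^{+}$, and then invoke Lemma~\ref{lem:LT of f_k} (the paper invokes Lemma~\ref{lem:LT of the derivative of multivariable function} for $l=n$ and defers $l<n$ to the same block-variable reduction you carry out explicitly). Your explicit bookkeeping for general $\mathbf{k}$ and the bound $\left|g\right|\leq Ct_{1}^{1-\alpha}$ for the boundary term are minor elaborations of what the paper leaves implicit.
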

\begin{proof}
As before, we start with $f_{\mathbf{k}}\left(d\mathbf{t}\right)$
where $l=n$ so that $f_{\mathbf{k}}\left(d\mathbf{t}\right)=f\left(\mathbf{t}\right)$.
A simple computation shows that 
\begin{alignat*}{1}
\mathscr{L}\left(\integral 0{t_{1}}f\left(t_{1}-r,t_{2}-r,...,t_{n}-r\right)\frac{r^{-\alpha}}{\Gamma\left(1-\alpha\right)}dr\right)\left(\mathbf{s}\right) & =\left(\sum_{i=1}^{n}s_{i}\right)^{\alpha-1}\hat{f}\left(s_{1},\ldots s_{n}\right).
\end{alignat*}
Next, note that 
\begin{alignat*}{1}
\mathscr{L}\left(\integral 0{t_{1}}f\left(t_{1}-r,t_{2}-r,...,t_{n}-r\right)\frac{r^{-\alpha}}{\Gamma\left(1-\alpha\right)}dr\right)\left(t_{1},s_{2},...,s_{n}\right)\\
=\integral{r=0}{t_{1}}e^{-\left(\sum_{i=2}^{n}s_{i}\right)r}\widehat{f}\left(t_{1}-r,s_{2},...,s_{n}\right)dr
\end{alignat*}
so that $\underset{t_{1}\rightarrow0^{+}}{\lim}\mathscr{L}\left(\integral 0{t_{1}}f\left(t_{1}-r,t_{2}-r,...,t_{n}-r\right)\frac{r^{-\alpha}}{\Gamma\left(1-\alpha\right)}dr\right)\left(t_{1},s_{2},...,s_{n}\right)=0$.
It follows by Lemma \ref{lem:LT of the derivative of multivariable function}
that 
\[
\widehat{\mathbb{\mathfrak{D}}_{\mathbf{t}}^{\alpha}f_{\mathbf{k}}}\left(d\mathbf{t}\right)=\left(\sum_{i=1}^{n}s_{n}\right)^{\alpha}\widehat{f}\left(s_{1},\ldots s_{n}\right).
\]
The case where $l<n$ is similar to Lemma \ref{lem:LT of f_k}.\end{proof}
\begin{rem}
There is nothing exceptional about the operator $\mathbb{\mathfrak{D}}_{\mathbf{t}}^{\alpha}$,
in fact it is better to think of it as an archetype of PDOs corresponding
to Laplace symbols of L\'{e}vy measures on $\mathbb{R}_{+}$. Indeed,
if $\phi\left(s\right)=\integral{\mathbb{R}_{+}}{}\left(e^{-sy}-1\right)K_{2}\left(y\right)dy$
, then $\phi\left(s\right)$ is the symbol of the PDO $\Phi_{t}\left(f\right)\left(t\right)=\integral 0{\infty}\left(f\left(t-y\right)-f\left(t\right)\right)K_{2}\left(y\right)dy$.
A simple calculation then shows that $\phi\left(\sum_{i=1}^{n}s_{i}\right)$
is the symbol of $\Phi_{\mathbf{t}}\left(f\right)\left(\mathbf{t}\right)=\integral 0{\infty}\left(f\left(t_{1}-y,...,t_{n}-y\right)-f\left(t\right)\right)K_{2}\left(y\right)dy$
. The extension to the functions $f_{\mathbf{k}}$ is obtained along
similar lines to Lemma $\ref{lem: LT of f_k for t}$.
\end{rem}

\subsection{\label{sub:The-Semi-Markov-Approach}The Semi-Markov Approach}

Since the process $X_{t}=A_{E_{t}}$ is not Markovian, knowing its
one dimensional distribution in not enough to construct its FDDs.
To circumvent this problem Meerschaert and Straka (\cite{Meerschaert})
constructed the Markov process $\left(X_{t},R_{t}\right)$, where
$R_{t}=D_{E_{t}}-t$ is the time left before the next regeneration
of the process $X_{t}$. Let $Q_{t}\left(x',r';dx,dr\right)$ be the
transition probability of the process $\left(X_{t},R_{t}\right)$
and $0<t_{1}<t_{2}<...<t_{n}$ for some $n\in\mathbb{N}$. Then
\begin{alignat}{1}
 & P\left(X_{t_{1}}\in dx_{1},X_{t_{2}}\in dx_{2},...,X_{t_{n}}\in dx_{n}\right)\label{eq:Finite dimensional distribution general}\\
 & =\integral{r_{1}=0}{\infty}\integral{r_{2}=0}{\infty}\cdots\integral{r_{n}=0}{\infty}Q_{t_{1}}\left(0,0;dx_{1},dr_{1}\right)\nonumber \\
\times & Q_{t_{2}-t_{1}}\left(x_{1},r_{1};dx_{2},dr_{2}\right)\cdots\times Q_{t_{n}-t_{n-1}}\left(x_{n-1},r_{n-1};dx_{n},dr_{n}\right)\nonumber \\
 & =Q_{t_{1}}\left(0,0;dx_{1},dr_{1}\right)\circ Q_{t_{2}-t_{1}}\left(x_{1},r_{1};dx_{2},dr_{2}\right)\cdots Q_{t_{n}-t_{n-1}}\left(x_{n-1},r_{n-1};dx_{n},dr_{n}\right)\circ.\nonumber 
\end{alignat}
 Here, $Q_{t}\left(x',r';dx,dr\right)\circ f\left(x,r\right)=\integral{r=0}{\infty}f\left(x,r\right)Q_{t}\left(x',r';dx,dr\right)$
and $Q_{t}\left(x',r';dx,dr\right)\circ=\integral{r=0}{\infty}Q_{t}\left(x',r';dx,dr\right)$.
In \cite{Meerschaert}, the expression for $Q_{t}$ is given for a
large class of jump diffusions. Here, however, unless stated otherwise
we consider processes of the form $X_{t}=A_{E_{t}}$, where $A_{t}$
is a L\'{e}vy process and $E_{t}$ is the inverse of a strictly increasing
subordinator $D_{t}$ that is independent of $A_{t}$. That is,
\[
E_{t}=\inf\left\{ s>0:D_{s}>t\right\} .
\]
More precisely, the characteristic function of $A_{t}$ and the Laplace
transform of $D_{t}$ are given respectively by 
\begin{alignat}{1}
E\left(e^{ikA_{t}}\right) & =\exp\left[t\left(ibk-\frac{1}{2}ak^{2}+\integral{\mathbb{R}}{}\left(e^{iky}-1-iky1_{\{\left|y\right|<1\}}\right)K_{1}\left(dy\right)\right)\right]\label{eq:Charcteristic functions of A,D}\\
E\left(e^{-sD_{t}}\right) & =\exp\left[t\left(\integral{\mathbb{R}_{+}}{}\left(e^{-sy}-1\right)K_{2}\left(dy\right)\right)\right].\nonumber 
\end{alignat}
 Here, $a\geq0,b\in\mathbb{R}$. $K_{1}$ is a L\'{e}vy measure while
$K_{2}$ is a measure whose support is $[0,\infty)$ and satisfies
$\int\left(y\wedge1\right)K_{2}\left(dy\right)<\infty$, $K_{2}\left(\left\{ 0\right\} \right)=0$
and $\int K_{2}\left(dy\right)=\infty$. By (\ref{eq:Charcteristic functions of A,D})
it can be easily verified that the infinitesimal generator $\mathcal{A}$
of the process $\left(A_{t},D_{t}\right)$ is
\begin{alignat}{1}
\mathcal{A}\left(f\right)\left(x,t\right) & =b\frac{\partial}{\partial x}f\left(x,t\right)+\frac{a}{2}\frac{\partial^{2}}{\partial x^{2}}f\left(x,t\right)\label{eq:infintisimal generator of A,D}\\
 & +\integral{\mathbb{R}^{2}}{}\left(f\left(x+y,t+w\right)-f\left(x,t\right)-y\frac{\partial f(x,t)}{\partial x}1_{\{\left|\left(y,w\right)\right|<1\}}\right)K\left(dy,dw\right),\nonumber 
\end{alignat}
where $K$ is again a L\'{e}vy measure. In \cite{Meerschaert}, the case
where the coefficients $b$ and $a$ as well as the measure $K$ may
be dependent on $\left(x,t\right)$ is considered. However, when they
do not(this is referred to as the homogeneous case), the transition
probability $Q_{t}$ is given by (\cite[Equation. 4.4]{Meerschaert})
\begin{alignat}{1}
Q_{t}\left(x',r';dx,dr\right) & =1_{\{0<t<r'\}}\delta_{0}\left(dx-x'\right)\delta_{r'-t}\left(dr\right)+1_{\{0\leq r'\leq t\}}Q_{t-r'}\left(x',0;dx,dr\right)\nonumber \\
Q_{t}\left(x',0;dx,dr\right) & =\integral{y\in\mathbb{R}}{}\integral{w\in[0,t]}{}U^{x'}\left(dy,dw\right)K\left(dx-y,dr+t-w\right),\label{eq:general transition probabilities}
\end{alignat}
where $U^{x'}\left(dy,dw\right)$ is the occupation measure of $\left(A_{t},D_{t}\right)$,
i.e
\begin{alignat*}{1}
\intop f\left(y,w\right)U^{x'}\left(dy,dw\right) & =\mathbb{E}\left(\integral 0{\infty}f\left(A_{u}+x',D_{u}\right)du\right).
\end{alignat*}
When the processes $A_{t}$ and $D_{t}$ are independent, it can be
easily verified that 
\begin{equation}
U^{x'}\left(dy,dw\right)=\integral 0{\infty}z\left(dy-x',u\right)g\left(dw,u\right)du,\label{eq:Occupation measure of UCTRWL}
\end{equation}
where $z\left(dx,t\right)=P\left(A_{t}\in dx\right)$ and $g\left(dx,t\right)=P\left(D_{t}\in dx\right)$.
Moreover, in the case of independence it was shown that (\cite[Corollary 2.3]{becker2004limit})
\[
K\left(dy,dw\right)=K_{1}\left(dy\right)\delta_{0}\left(dw\right)+\delta_{0}\left(dy\right)K_{2}\left(dw\right).
\]
Hence, equations (\ref{eq:general transition probabilities}) translate
into

\begin{comment}
\begin{lem}
If \textup{$\gamma>0$ then $U^{x_{0}}$ is absolutely continuous
with respect to Lesbeuge measure on $\mathbb{R}^{2}$.}\end{lem}
\begin{proof}
Let $C\subseteq\mathbb{R}^{2}$ be a set of Lebesgue measure zero.
Note that the occupation measure of $D_{t}$ is $u_{D}\left(dx\right)=\integral 0{\infty}g\left(dx,u\right)du$
and by \cite[Prop 1.7]{Bertoin2013} we see that $u_{D}\left(dx\right)$
is absolutely continuous on $[0,\infty)$. Therefore, by (\ref{eq:Occupation measure of UCTRWL})
we see that
\begin{alignat*}{1}
\integral C{}U^{x_{0}}\left(dy,dw\right) & =\integral{w\in[0,\infty)}{}\integral{x\in\mathbb{R}}{}\integral 0{\infty}f\left(dx-x_{0},u\right)g\left(w,u\right)1_{C}\left(x,w\right)dudw\\
 & \leq
\end{alignat*}
\end{proof}
\end{comment}

\begin{alignat}{1}
Q_{t}\left(x',r';dx,dr\right) & =1_{\{0<t<r'\}}\delta_{0}\left(dx-x'\right)\delta_{r'-t}\left(dr\right)\nonumber \\
 & +1_{\{0\leq r'\leq t\}}\integral{y\in\mathbb{R}}{}\integral{\,w\in[0,t-r']}{}\left(\integral 0{\infty}z\left(dy-x',u\right)g\left(dw,u\right)du\right)\nonumber \\
 & \times\left(\delta_{0}\left(dr+t-r'-w\right)K_{1}\left(dx-y\right)+\delta_{0}\left(dx-y\right)K_{2}\left(dr+t-r'-w\right)\right).\label{eq:Transition probability of UCTRWL}
\end{alignat}
However, since $\int K_{2}\left(dy\right)=\infty$, we see (\cite[Theorem. 27.4]{sato1999L\'{e}vy})
that $g\left(dw,t\right)$ has no atoms. Therefore, (\ref{eq:Transition probability of UCTRWL})
reduces to
\begin{alignat}{1}
Q_{t}\left(x',r';dx,dr\right) & =1_{\{0\leq t<r'\}}\delta_{0}\left(dx-x'\right)\delta_{r'-t}\left(dr\right)\nonumber \\
 & +1_{\{0\leq r'\leq t\}}\integral{\,w\in[0,t-r']}{}\left(\integral 0{\infty}z\left(dx-x',u\right)g\left(dw,u\right)du\right)\label{eq:transition probaiblity of UCTRWL2}\\
 & \times K_{2}\left(dr+t-r'-w\right).\nonumber 
\end{alignat}

\section{\label{sec:Fokker-Planck-Equations}Fokker-Planck Equations}

\begin{comment}
Let us denote by $h_{t}\left(x_{0},r_{0};x_{1},dr_{1}\right)$ the
transition probability function of the process $\left(E_{t},R_{t}\right)$.
By equation \ref{eq:transition probaiblity of UCTRWL2} and the fact
that the occupation measure of $\left(t,D_{t}\right)$ is 
\begin{alignat*}{1}
U^{x_{0}}\left(dy,dw\right) & =\integral 0{\infty}\delta_{u}\left(dy-x_{0}\right)g\left(w,u\right)dudw\\
 & =g\left(w,y-x_{0}\right)dydw
\end{alignat*}
we see that 
\begin{alignat}{1}
h_{t}\left(x_{0},r_{0};x_{1},dr_{1}\right) & =1_{\{0\leq t<r_{0}\}}\delta_{0}\left(x_{1}-x_{0}\right)\delta_{0}\left(dr-r_{0}\right)+1_{\{0\leq r_{0}\leq t\}}\integral{\,s\in[0,t]}{}\left(\integral{x_{1}\in\mathbb{R}^{+}}{}g\left(s,x_{1}-x_{0}\right)du\right)\label{eq:pdf of (E,R)}\\
 & \times\frac{\alpha}{\Gamma\left(1-\alpha\right)}\left(r_{1}+\left(t-s\right)\right){}^{-1-\alpha}1_{\left\{ r_{1}\geq t-s\right\} }drds.\nonumber 
\end{alignat}
\end{comment}
Throughout this section, we let $A_{t}$ be a L\'{e}vy process such that
$E\left(e^{ikA_{t}}\right)=e^{t\psi\left(k\right)}$, its probability
density is given by $z\left(dx,t\right)=P\left(A_{t}\in dx\right)$.
$E_{t}$ is the inverse of a subordinator $D_{t}$ such that $E\left(e^{-sD_{t}}\right)=e^{t\phi\left(s\right)}$,
its probability density is $h\left(dx,t\right)=P\left(E_{t}\in dx\right)$.
We denote by $\Psi$ and $\Phi$ the pseudo-differential operators
of the symbols $\psi\left(-k\right)$ and $-\phi\left(s\right)$ respectively.
We also denote the transition probability function of the Markov process
$\left(X_{t},R_{t}\right)$ by $Q_{t}$ and that of $\left(E_{t},R_{t}\right)$
by $H_{t}$. Next note that the occupation measure of $\left(t,E_{t}\right)$
is just $U^{x'}\left(dx,dw\right)=g\left(dw,x-x'\right)dx$(cf. \cite[Eq. 5.1]{Meerschaert}),
and similarly to (\ref{eq:transition probaiblity of UCTRWL2}) we
have
\begin{alignat}{1}
H_{t}\left(x',r';dx,dr\right) & =1_{\{0\leq t<r'\}}\delta_{0}\left(dx-x'\right)\delta_{r'-t}\left(dr\right)\label{eq:transition of H}\\
 & +1_{\{0\leq r'\leq t\}}\integral{\,w\in[0,t-r']}{}g\left(dw,x-x'\right)dx\times K_{2}\left(dr+t-r'-w\right).\nonumber 
\end{alignat}
The next theorem finds the FFPE of the FDD of $E_{t}$.

\begin{thm}
\label{thm:governing equation of multi variable h} Let $h\left(dx_{1},...,dx_{n};t_{1},...,t_{n}\right)$
be the FDD of $E_{t}$ where $t_{1}<t_{2}<...<t_{n}$, i.e 
\[
h\left(dx_{1},...,dx_{n};t_{1},...,t_{n}\right)=P\left(E_{t_{1}}\in dx_{1},...,E_{t_{n}}\in dx_{n}\right).
\]
 Then 
\begin{equation}
\Phi_{\mathbf{t}}h\left(d\mathbf{x};\mathbf{t}\right)=-\mathbb{D}_{\mathbf{x}}h\left(d\mathbf{x};\mathbf{t}\right).\label{eq:governing equation of multi variable h}
\end{equation}
\end{thm}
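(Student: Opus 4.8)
The plan is to verify (\ref{eq:governing equation of multi variable h}) after taking the Laplace transform in the time variables $\mathbf{t}$, where $\Phi_{\mathbf{t}}$ becomes multiplication by its symbol $-\phi\left(\sum_{i}s_{i}\right)$, and then to invert. First I would write $h$ via the semi-Markov representation (\ref{eq:Finite dimensional distribution general}) applied to the Markov pair $\left(E_{t},R_{t}\right)$ in place of $\left(X_{t},R_{t}\right)$, so that
\[
h\left(d\mathbf{x};\mathbf{t}\right)=\int_{r_{1}}\cdots\int_{r_{n}}H_{t_{1}}\left(0,0;dx_{1},dr_{1}\right)H_{t_{2}-t_{1}}\left(x_{1},r_{1};dx_{2},dr_{2}\right)\cdots H_{t_{n}-t_{n-1}}\left(x_{n-1},r_{n-1};dx_{n},dr_{n}\right)\circ,
\]
with $H_{t}$ as in (\ref{eq:transition of H}). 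Passing to the increments $u_{i}=t_{i}-t_{i-1}$ (with $u_{1}=t_{1}$) turns the Laplace transform in $\mathbf{t}$ into one in $\mathbf{u}$ in which the $i$-th kernel is evaluated at $\sigma_{i}:=\sum_{j\ge i}s_{j}$, giving the chained product
\[
\hat{h}\left(d\mathbf{x};\mathbf{s}\right)=\int_{r_{1}}\cdots\int_{r_{n}}\hat{H}_{\sigma_{1}}\left(0,0;dx_{1},dr_{1}\right)\hat{H}_{\sigma_{2}}\left(x_{1},r_{1};dx_{2},dr_{2}\right)\cdots\hat{H}_{\sigma_{n}}\left(x_{n-1},r_{n-1};dx_{n},dr_{n}\right)\circ,
\]
where $\hat{H}_{\sigma}$ is the Laplace transform of $H_{u}$ in $u$.

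Next I would compute $\hat{H}_{\sigma}\left(x',r';dx,dr\right)$ from (\ref{eq:transition of H}). Using $\int_{0}^{\infty}e^{-\sigma w}g\left(dw,x-x'\right)=\mathbb{E}\left(e^{-\sigma D_{x-x'}}\right)=e^{\left(x-x'\right)\phi(\sigma)}$, the "stuck" part of $\hat{H}_{\sigma}$ contributes $e^{-\sigma r'}\delta_{0}\left(dx-x'\right)e^{\sigma r}1_{\{0<r<r'\}}dr$ and the renewal part contributes $e^{-\sigma r'}e^{\left(x-x'\right)\phi(\sigma)}dx\cdot e^{\sigma r}\kappa(\sigma,r)dr$, where $\kappa(\sigma,r)=\int_{r}^{\infty}e^{-\sigma y}K_{2}\left(dy\right)$. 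Expanding the product over the two options at steps $2,\dots,n$ (step $1$ is forced to be a renewal since $r_{0}=0$) indexes the terms by the increasing tuples $\mathbf{k}$ with $k_{1}=1$: the steps in $\mathbf{k}$ are the renewals and those in $\mathbf{k}^{c}$ are "stuck", so the $\mathbf{k}$-term is supported on $A_{\mathbf{k}}^{n}$. The $\delta_{0}\left(dx_{i}-x_{i-1}\right)$'s at stuck steps carry $\mathbf{x}$ along unchanged, so the only $\mathbf{x}$-dependence comes from the renewal factors $e^{\left(x_{i}-x_{i-1}\right)\phi(\sigma_{i})}$; since $x_{i}-x_{i-1}=x_{k_{j}}-x_{k_{j-1}}$ whenever $i=k_{j}$, these collapse to $\prod_{j=1}^{l}e^{\left(x_{k_{j}}-x_{k_{j-1}}\right)\phi(\sigma_{k_{j}})}$ with $x_{k_{0}}:=0$. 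The remaining $r_{1},\dots,r_{n}$ integrals, in which $r_{j}$ is paired with the factor $e^{-\sigma_{j+1}r_{j}}$ coming from the next kernel and consecutive stuck steps impose the nested constraints $r_{j+1}<r_{j}$, are then evaluated block by block over the maximal runs of stuck steps between consecutive renewals; each is an iterated integral of exponentials against $\kappa$, convergent via $\int_{0}^{\infty}e^{cr}\kappa(\sigma,r)dr=c^{-1}\left(\phi(\sigma-c)-\phi(\sigma)\right)$ for $0\le c\le\sigma$ (using $\phi(0)=0$), and it contributes no further $\mathbf{x}$-dependence. Hence on $A_{\mathbf{k}}^{n}$,
\[
\hat{h}_{\mathbf{k}}\left(\mathbf{x}_{\mathbf{k}};\mathbf{s}\right)=C_{\mathbf{k}}\left(\mathbf{s}\right)\prod_{j=1}^{l}e^{\left(x_{k_{j}}-x_{k_{j-1}}\right)\phi\left(\sigma_{k_{j}}\right)},
\]
with $C_{\mathbf{k}}\left(\mathbf{s}\right)$ independent of $\mathbf{x}$ and whose explicit value we shall not need.

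The conclusion follows by a telescoping computation. The exponent $\sum_{j=1}^{l}\left(x_{k_{j}}-x_{k_{j-1}}\right)\phi(\sigma_{k_{j}})$ is linear in $\mathbf{x}_{\mathbf{k}}$, with $\partial_{x_{k_{j}}}$ of it equal to $\phi(\sigma_{k_{j}})-\phi(\sigma_{k_{j+1}})$ for $j<l$ and to $\phi(\sigma_{k_{l}})$ for $j=l$; hence, by the definition of $\mathbb{D}_{\mathbf{x}}$ on $f_{\mathbf{k}}$-distributions,
\[
\mathbb{D}_{\mathbf{x}}\hat{h}_{\mathbf{k}}\left(d\mathbf{x};\mathbf{s}\right)=\sum_{j=1}^{l}\partial_{x_{k_{j}}}\hat{h}_{\mathbf{k}}\left(d\mathbf{x};\mathbf{s}\right)=\phi(\sigma_{k_{1}})\,\hat{h}_{\mathbf{k}}\left(d\mathbf{x};\mathbf{s}\right)=\phi\!\left(\textstyle\sum_{i=1}^{n}s_{i}\right)\hat{h}_{\mathbf{k}}\left(d\mathbf{x};\mathbf{s}\right),
\]
the sum over $j$ telescoping because $k_{1}=1$ and $\sigma_{1}=\sum_{i}s_{i}$. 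On the other hand $\Phi_{\mathbf{t}}$ is the PDO with symbol $-\phi\left(\sum_{i}s_{i}\right)$, so Lemma \ref{lem: LT of f_k for t} together with the Remark following it gives $\widehat{\Phi_{\mathbf{t}}h_{\mathbf{k}}}\left(d\mathbf{x};\mathbf{s}\right)=-\phi\left(\sum_{i}s_{i}\right)\hat{h}_{\mathbf{k}}\left(d\mathbf{x};\mathbf{s}\right)$, which by the previous display equals $-\mathbb{D}_{\mathbf{x}}\hat{h}_{\mathbf{k}}\left(d\mathbf{x};\mathbf{s}\right)=\widehat{-\mathbb{D}_{\mathbf{x}}h_{\mathbf{k}}}\left(d\mathbf{x};\mathbf{s}\right)$ (the Laplace transform in $\mathbf{t}$ commuting with $\mathbb{D}_{\mathbf{x}}$). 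Summing over $\mathbf{k}$ and inverting the Laplace transform in $\mathbf{t}$ yields (\ref{eq:governing equation of multi variable h}).

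I expect the main difficulty to be the book-keeping in the second paragraph: carrying out the nested $r$-integrals over the stuck-blocks for a general $\mathbf{k}$, justifying the Fubini interchanges and the convergence of the iterated Laplace integrals, and matching the resulting object precisely with the $f_{\mathbf{k}}$-formalism of Subsection \ref{sub:Pseudo-differential-operators-of}, including the mild regularity of $h$ in $\mathbf{x}_{\mathbf{k}}$ and in $\mathbf{t}$ needed for $\mathbb{D}_{\mathbf{x}}h$ and $\Phi_{\mathbf{t}}h$ to be defined. Once the exponential form of $\hat{h}_{\mathbf{k}}$ is in hand, the telescoping is immediate and, exactly as in the one-dimensional case, no boundary terms survive.
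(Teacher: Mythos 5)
Your proposal is correct in outline and its key computations check out (the exponential form of $\hat{h}$ on each stratum can be verified directly from (\ref{eq:transition of H}), and the telescoping $\sum_{j}\partial_{x_{k_{j}}}\bigl[\sum_{j}(x_{k_{j}}-x_{k_{j-1}})\phi(\sigma_{k_{j}})\bigr]=\phi(\sigma_{1})$ is right), but it takes a genuinely different route from the paper. The paper also starts from the chained representation (\ref{eq:FD distribution of E}) and the substitution producing the parameters $\sigma_{i}=\sum_{j\geq i}s_{j}$, but it then takes the Laplace transform in the \emph{spatial} variables as well, computes explicitly only the \emph{first} link of the chain (obtaining the factor $\bigl(\sum_{i}\lambda_{i}-\phi(\sum_{i}s_{i})\bigr)^{-1}$ times a product it never needs to evaluate), and identifies that leftover product with the boundary value $\hat{h}(0^{+},\lambda_{2},\ldots,\lambda_{n};\mathbf{s})$ via a weak-convergence argument ($g(dw,x_{1})\to\delta_{0}(dw)$ as $x_{1}\to0^{+}$). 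This yields the algebraic identity $-\phi(\sum_{i}s_{i})\hat{h}=-(\sum_{i}\lambda_{i})\hat{h}+\hat{h}(0^{+},\ldots)$, whose right-hand side is exactly $-\widehat{\mathbb{D}_{\mathbf{x}}h}$ by Lemma \ref{lem:LT of f_k}; so the entire stratum-by-stratum bookkeeping that you (rightly) flag as the main difficulty — the $2^{n-1}$ tuples $\mathbf{k}$, the nested $r$-integrals over the stuck blocks, and the matching with the $f_{\mathbf{k}}$ formalism — is precisely what the paper's argument avoids, at the cost of working only in transform space and never exhibiting $\hat{h}$ explicitly. Your approach buys more: an explicit formula for $\hat{h}_{\mathbf{k}}(\mathbf{x}_{\mathbf{k}};\mathbf{s})$ up to the constants $C_{\mathbf{k}}(\mathbf{s})$, and a transparent mechanism for the equation (the boundary term of Lemma \ref{lem:LT of f_k} simply never arises because you differentiate pointwise rather than integrating by parts). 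Two caveats: you should still carry out, or at least bound, the stuck-block $r$-integrals to confirm $C_{\mathbf{k}}(\mathbf{s})<\infty$ (the identity $\int_{0}^{\infty}e^{cr}\kappa(\sigma,r)dr=c^{-1}(\phi(\sigma-c)-\phi(\sigma))$ you quote does the renewal-to-renewal case, with $c=\sigma_{j}-\sigma_{j+1}=s_{j}>0$); and note that your stratum-wise computation leans on the independence/uncoupled structure of $E_{t}$ in a way that does not extend to the coupled setting of Corollary \ref{cor:MD FPE of general A_E_t}, whereas the paper's ``first link plus boundary term'' scheme is reused essentially verbatim there.
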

\begin{proof}
Let us take LT with respect to the spatial variables and with respect
to the time variables, this will be abbreviated by LLT. Before taking
the LLT of $h\left(d\mathbf{x};\mathbf{t}\right)$ we note that since
$H_{t}\left(x',r';dx,dr\right)$ is translation invariant with respect
to the spatial variable we have
\begin{alignat}{1}
 & h\left(d\mathbf{x};\mathbf{t}\right)\nonumber \\
 & =H_{t_{1}}\left(0,0;dx_{1},dr_{1}\right)\circ H_{t_{2}-t_{1}}\left(0,r_{1};dx_{2}-x_{1},dr_{2}\right)\cdots H_{t_{n}-t_{n-1}}\left(0,r_{n-1};dx_{n}-x_{n-1},dr_{n}\right)\circ.\label{eq:FD distribution of E}
\end{alignat}
Taking the LLT of (\ref{eq:FD distribution of E}), by a simple change
of variables we see that(to avoid confusion we now use $\lambda$
instead of $k$)
\begin{alignat}{1}
 & \widehat{h}\left(\lambda_{1},\ldots\lambda_{n};s_{1},\ldots s_{n}\right)\nonumber \\
 & =\integral{t_{1}=0}{\infty}\integral{x_{1}=0}{\infty}e^{-\left(\sum_{i=1}^{n}s_{i}\right)t_{1}-\left(\sum_{i=1}^{n}\lambda_{i}\right)x_{1}}H_{t_{1}}\left(0,0;dx_{1},dr_{1}\right)\circ dt_{1}\label{eq:governing equation of h}\\
 & \widehat{H}_{\sum_{i=2}^{n}s_{i}}\left(0,r_{1};\sum_{i=2}^{n}\lambda_{i},dr_{2}\right)\circ\cdots\widehat{H}_{s_{n}+s_{n-1}}\left(0,r_{n-2};\lambda_{n}+\lambda_{n-1},dr_{n-1}\right)\circ\widehat{H}_{s_{n}}\left(0,r_{n-1};\lambda_{n},dr_{n}\right)\circ.\nonumber 
\end{alignat}
Now, let us look at
\begin{alignat}{1}
 & \integral{t_{1}=0}{\infty}\integral{x_{1}=0}{\infty}e^{-\left(\sum_{i=1}^{n}s_{i}\right)t_{1}-\left(\sum_{i=1}^{n}\lambda_{i}\right)x_{1}}H_{t_{1}}\left(0,0;dx_{1},dr_{1}\right)dt_{1}\nonumber \\
 & =\integral{t_{1}=0}{\infty}\integral{x_{1}=0}{\infty}e^{-\left(\sum_{i=1}^{n}s_{i}\right)t_{1}-\left(\sum_{i=1}^{n}\lambda_{i}\right)x_{1}}\integral{\,w\in[0,t_{1}]}{}g\left(w,x_{1}\right)dx_{1}\nonumber \\
 & \times K_{2}\left(dr_{1}+t_{1}-w\right)dw\nonumber \\
 & =\integral{x_{1}=0}{\infty}e^{-\left(\sum_{i=1}^{n}\lambda_{i}\right)x_{1}}\integral{\,w\in[0,\infty]}{}g\left(w,x_{1}\right)dx_{1}\nonumber \\
 & \times\integral{t_{1}=w}{\infty}e^{-\left(\sum_{i=1}^{n}s_{i}\right)t_{1}}K_{2}\left(dr_{1}+t_{1}-w\right)dw\nonumber \\
 & =\integral{x_{1}=0}{\infty}e^{-\left(\sum_{i=1}^{n}\lambda_{i}\right)x_{1}}\integral{\,w\in[0,\infty]}{}g\left(w,x_{1}\right)dx_{1}e^{-\left(\sum_{i=1}^{n}s_{i}\right)w}dw\nonumber \\
 & \times\integral{t_{1}=0}{\infty}e^{-\left(\sum_{i=1}^{n}s_{i}\right)t_{1}}K_{2}\left(dr_{1}+t_{1}\right)\nonumber \\
 & =\frac{1}{\sum_{i=1}^{n}\lambda_{i}-\phi\left(\sum_{i=1}^{n}s_{i}\right)}\integral{t_{1}=0}{\infty}e^{-\left(\sum_{i=1}^{n}s_{i}\right)t_{1}}K_{2}\left(dr_{1}+t_{1}\right).\label{eq:governing equation of h-1}
\end{alignat}
Next note that,
\begin{alignat}{1}
 & \underset{x_{1}\rightarrow0^{+}}{\lim}\widehat{h}\left(x_{1},\lambda_{2},\ldots\lambda_{n};s_{1},\ldots s_{n}\right)\label{eq:governing equation of h-2}\\
 & =\underset{x_{1}\rightarrow0^{+}}{\lim}\integral{t_{1}=0}{\infty}e^{-\left(\sum_{i=1}^{n}s_{i}\right)t_{1}-\left(\sum_{i=2}^{n}\lambda_{i}\right)x_{1}}\integral{\,w\in[0,t_{1}]}{}g\left(dw,x_{1}\right)\times\integral{r_{1}=0}{\infty}K_{2}\left(dr_{1}+t_{1}-w\right)\nonumber \\
 & \phantom{}\times\widehat{H}_{\sum_{i=2}^{n}s_{i}}\left(0,r_{1};\sum_{i=2}^{n}\lambda_{i},dr_{2}\right)\circ\cdots\widehat{H}_{s_{n}+s_{n-1}}\left(0,r_{n-2};\lambda_{n}+\lambda_{n-1},dr_{n-1}\right)\circ\widehat{H}_{s_{n}}\left(0,r_{n-1};\lambda_{n},dr_{n}\right)\circ.\nonumber \\
 & =\integral{t_{1}=0}{\infty}e^{-\left(\sum_{i=1}^{n}s_{i}\right)t_{1}}\integral{r_{1}=0}{\infty}K_{2}\left(dr_{1}+t_{1}\right)\nonumber \\
 & \phantom{}\times\widehat{H}_{\sum_{i=2}^{n}s_{i}}\left(0,r_{1};\sum_{i=2}^{n}\lambda_{i},dr_{2}\right)\cdots\widehat{H}_{s_{n}+s_{n-1}}\left(0,r_{n-2};\lambda_{n}+\lambda_{n-1},dr_{n-1}\right)\circ\widehat{H}_{s_{n}}\left(0,r_{n-1};\lambda_{n},dr_{n}\right)\circ.\nonumber 
\end{alignat}
 Indeed, by the continuity of the measure $K_{2}$ and \cite[Lemma 27.1]{sato1999levy}
follows the continuity of the following function 
\[
w\mapsto\integral{r_{1}=0}{\infty}K_{2}\left(dr_{1}+t_{1}-w\right)\phantom{}\times\widehat{H}_{\sum_{i=2}^{n}s_{i}}\left(0,r_{1};\sum_{i=2}^{n}\lambda_{i},dr_{2}\right)\circ\cdots\widehat{H}_{s_{n}}\left(0,r_{n-1};\lambda_{n},dr_{n}\right)\circ,
\]
since $g\left(dw,x_{1}\right)dx_{1}$ converges weakly to $\delta_{0}\left(dw\right)$
as $x_{1}\rightarrow0^{+}$ (\ref{eq:governing equation of h-2})
follows. Finally, plugging (\ref{eq:governing equation of h-1}) in
(\ref{eq:governing equation of h}), using (\ref{eq:governing equation of h-2})
and rearranging terms we arrive at 
\begin{equation}
-\phi\left(\sum_{i=1}^{n}s_{i}\right)\widehat{h}\left(\lambda_{1},\ldots\lambda_{n};s_{1},\ldots s_{n}\right)=-\left(\sum_{i=1}^{n}\lambda_{i}\right)\widehat{h}\left(\lambda_{1},\ldots\lambda_{n};s_{1},\ldots s_{n}\right)+\widehat{h}\left(0^{+},\lambda_{2},\ldots\lambda_{n};s_{1},\ldots s_{n}\right).\label{eq:governing equation of h-3}
\end{equation}
Taking the inverse LLT of (\ref{eq:governing equation of h-3}) and
using Lemma \ref{lem:LT of f_k} we obtain (\ref{eq:governing equation of multi variable h}).
\end{proof}
Theorem \ref{thm:governing equation of multi variable h} paves the
way for the finite dimensional FFPEs of the process $X_{t}$. We denote
the FDD of $A_{t}$ by $z\left(dx_{1},...,dx_{n};t_{1},...,t_{n}\right)=P\left(A_{t_{1}}\in dx_{1},...,A_{t_{n}}\in dx_{n}\right)$.
\begin{cor}
\label{cor: MD FPE of A_E_t} Let $p\left(dx_{1},...,dx_{n};t_{1},...,t_{n}\right)=P\left(X_{t_{1}}\in dx_{1},\ldots,X_{t_{n}}\in dx_{n}\right)$
where $t_{1}<t_{2}<...<t_{n}$. Then
\begin{alignat}{1}
\Phi_{\mathbf{t}}p\left(dx_{1},...,dx_{n};t_{1},...,t_{n}\right) & =\Psi_{\mathbf{x}}p\left(dx_{1},...,dx_{n};t_{1},...,t_{n}\right)\nonumber \\
 & +\integral{u_{2}=0}{\infty}\integral{u_{3}=u_{2}}{\infty}\cdots\integral{u_{n}=u_{n}-1}{\infty}\delta_{0}\left(dx_{1}\right)z\left(dx_{2},...,dx_{n};u_{2},...,u_{n}\right)h\left(0^{+},du_{2},\ldots,du_{n};t_{1},\ldots,t_{n}\right)\label{eq:multidimensional FPE of A_E_t}
\end{alignat}
\end{cor}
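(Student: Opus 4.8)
The plan is to derive (\ref{eq:multidimensional FPE of A_E_t}) from Theorem~\ref{thm:governing equation of multi variable h} by subordination, doing the bookkeeping on the Fourier--Laplace side. Since $A_{t}$ and $D_{t}$ are independent and $E_{t}$ is a measurable functional of $D_{t}$, conditioning on the non-decreasing vector $\left(E_{t_{1}},\ldots,E_{t_{n}}\right)$ gives
\[
p\left(d\mathbf{x};\mathbf{t}\right)=\integral{0\le u_{1}\le\cdots\le u_{n}}{}z\left(d\mathbf{x};\mathbf{u}\right)h\left(d\mathbf{u};\mathbf{t}\right).
\]
Taking the Fourier transform in $\mathbf{x}$ and using the independent increments of $A$, one computes $\widetilde{z}\left(\mathbf{k};\mathbf{u}\right)=\exp\bigl(\sum_{l=1}^{n}\left(u_{l}-u_{l-1}\right)\psi\left(-\left(k_{l}+\cdots+k_{n}\right)\right)\bigr)=\exp\bigl(\sum_{l=1}^{n}u_{l}\mu_{l}\bigr)$, where $u_{0}=0$, $\mu_{l}=\psi\left(-\left(k_{l}+\cdots+k_{n}\right)\right)-\psi\left(-\left(k_{l+1}+\cdots+k_{n}\right)\right)$ for $l<n$ and $\mu_{n}=\psi\left(-k_{n}\right)$, so that $\sum_{l=1}^{n}\mu_{l}=\psi\left(-\sum_{i=1}^{n}k_{i}\right)$. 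Passing to the increment variables $v_{l}=u_{l}-u_{l-1}$ shows that the integral converges absolutely (the real part of $\psi$ is nonpositive), and a further Laplace transform in $\mathbf{t}$ yields $\bar{p}\left(\mathbf{k};\mathbf{s}\right)=\widehat{h}\left(-\mu_{1},\ldots,-\mu_{n};\mathbf{s}\right)$, where $\widehat{h}$ denotes the space--time Laplace transform of $h$ from Theorem~\ref{thm:governing equation of multi variable h}: the replacement $\lambda_{j}\leftrightarrow-\mu_{j}$ is precisely ``subordination at the level of transforms''.

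Now evaluate the transform identity (\ref{eq:governing equation of h-3}) at $\boldsymbol{\lambda}=-\boldsymbol{\mu}$; using $\sum_{i}\mu_{i}=\psi\left(-\sum_{i}k_{i}\right)$ this becomes
\[
-\phi\Bigl(\sum_{i=1}^{n}s_{i}\Bigr)\bar{p}\left(\mathbf{k};\mathbf{s}\right)=\psi\Bigl(-\sum_{i=1}^{n}k_{i}\Bigr)\bar{p}\left(\mathbf{k};\mathbf{s}\right)+\widehat{h}\bigl(0^{+},-\mu_{2},\ldots,-\mu_{n};\mathbf{s}\bigr).
\]
The left-hand side is the Fourier--Laplace transform of $\Phi_{\mathbf{t}}p$, because $\Phi_{\mathbf{t}}$ acts without boundary term under the Laplace transform in the time variables, exactly as in Lemma~\ref{lem: LT of f_k for t} and the remark following it; the first term on the right is the Fourier--Laplace transform of $\Psi_{\mathbf{x}}p$, because $\Psi_{\mathbf{x}}$ acts by multiplication by $\psi\left(-\sum_{i}k_{i}\right)$ under the Fourier transform in space (Section~\ref{sec:Pseudo-Differential-Operators-on}).

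It remains to identify $\widehat{h}\left(0^{+},-\mu_{2},\ldots,-\mu_{n};\mathbf{s}\right)$ as the Fourier--Laplace transform of
\[
B\left(d\mathbf{x};\mathbf{t}\right)=\integral{0\le u_{2}\le\cdots\le u_{n}}{}\delta_{0}\left(dx_{1}\right)z\left(dx_{2},\ldots,dx_{n};u_{2},\ldots,u_{n}\right)h\left(0^{+},du_{2},\ldots,du_{n};\mathbf{t}\right),
\]
which is the right-hand tail of (\ref{eq:multidimensional FPE of A_E_t}). Rerunning the computation of the first paragraph with $u_{1}$ set to $0$ --- so that $A_{u_{1}}=A_{0}=0$, producing the factor $\delta_{0}\left(dx_{1}\right)$, and $\widetilde{z}\left(k_{2},\ldots,k_{n};u_{2},\ldots,u_{n}\right)=\exp\bigl(\sum_{l=2}^{n}u_{l}\mu_{l}\bigr)$ --- gives $\bar{B}\left(\mathbf{k};\mathbf{s}\right)=\widehat{h}\left(0^{+},-\mu_{2},\ldots,-\mu_{n};\mathbf{s}\right)$, where $h\left(0^{+},d\mathbf{u};\mathbf{t}\right)=\lim_{x_{1}\to0^{+}}h\left(x_{1},du_{2},\ldots,du_{n};\mathbf{t}\right)$. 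Since the Fourier--Laplace transform is injective on the relevant class of $f_{\mathbf{k}}$-valued measures, the displayed transform identity is equivalent to (\ref{eq:multidimensional FPE of A_E_t}).

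The conceptual skeleton behind this is simple: apply Theorem~\ref{thm:governing equation of multi variable h}, integrate the directional derivative $\mathbb{D}_{\mathbf{u}}$ by parts against $z\left(d\mathbf{x};\mathbf{u}\right)$ along the diagonal direction $\left(1,\ldots,1\right)$, and use that the finite dimensional distributions of the L\'{e}vy process $A$ satisfy the ``diagonal'' forward equation $\mathbb{D}_{\mathbf{u}}z\left(d\mathbf{x};\mathbf{u}\right)=\Psi_{\mathbf{x}}z\left(d\mathbf{x};\mathbf{u}\right)$, the only face of the cone $\left\{ 0\le u_{1}\le\cdots\le u_{n}\right\} $ met by the diagonal flow being $\left\{ u_{1}=0\right\} $. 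The technical difficulty --- and my reason for routing the argument through the Fourier--Laplace transform --- is the singular, $f_{\mathbf{k}}$-stratified structure of $h$ and $z$: one must justify commuting the operators $\Phi_{\mathbf{t}}$, $\Psi_{\mathbf{x}}$ with the subordination integral, the analytic continuation implicit in $\lambda_{j}\leftrightarrow-\mu_{j}$, and --- the genuinely delicate point --- the exact matching of the ``$u_{1}\to0^{+}$'' boundary contribution with $B$, namely that freezing $E_{t_{1}}=0^{+}$ collapses the first spatial marginal to $\delta_{0}\left(dx_{1}\right)$ while leaving $h\left(0^{+},\cdot;\mathbf{t}\right)$ as the $\left(n-1\right)$-variable boundary density.
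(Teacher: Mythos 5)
Your argument is correct in substance and shares the paper's overall strategy: both start from the subordination formula $p\left(d\mathbf{x};\mathbf{t}\right)=\int z\left(d\mathbf{x};\mathbf{u}\right)h\left(d\mathbf{u};\mathbf{t}\right)$, pass to transforms, and end at the same identity $\left(-\psi\left(-\sum k_{i}\right)-\phi\left(\sum s_{i}\right)\right)\overline{p}=\overline{B}$ with $\overline{B}$ the transform of the remainder term. Where you differ is in how that identity is reached. The paper does not substitute into the $h$-identity (\ref{eq:governing equation of h-3}); it recomputes $\overline{p}$ from scratch out of the semi-Markov kernels, writing $p$ via (\ref{eq:multidimensional FPE of A_E_t-1}) as a product of $z\left(dx_{1},u_{1}\right)$ and the $H_{t_{i}-t_{i-1}}$ factors, performing the change of variables $u_{i}'=u_{i}-u_{1}$, and evaluating the single integral $\int e^{u_{1}\left(\psi\left(-\sum k_{i}\right)+\phi\left(\sum s_{i}\right)\right)}du_{1}$ explicitly; the boundary term is then recognized via the already-established limit (\ref{eq:governing equation of h-2}). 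Your route instead encodes the whole outer process into the complex exponents $\mu_{l}$ (your telescoping $\sum_{l\geq j}\mu_{l}=\psi\left(-\left(k_{j}+\cdots+k_{n}\right)\right)$ is right, and it is exactly what makes the increment-variable convergence work) and then evaluates (\ref{eq:governing equation of h-3}) at $\boldsymbol{\lambda}=-\boldsymbol{\mu}$. This is more economical --- Theorem \ref{thm:governing equation of multi variable h} does all the work once --- but it buys you the one step you flag without completing: (\ref{eq:governing equation of h-3}) is derived for real $s_{i},\lambda_{i}>0$, and you need it on the closed tube $\left\{ \operatorname{Re}\sum_{l\geq j}\lambda_{l}\geq0\right\} $ where $-\boldsymbol{\mu}$ lives. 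This is not a fatal gap, but it must be said: in the increment variables both sides of (\ref{eq:governing equation of h-3}) are Laplace transforms of finite measures on $\mathbb{R}_{+}^{n}$, hence analytic on the open tube and continuous up to its boundary, and agreement on the real positive orthant forces agreement at $\boldsymbol{\lambda}=-\boldsymbol{\mu}$; the same continuity argument is needed for the boundary object $\widehat{h}\left(0^{+},\cdot\right)$. The paper's direct computation sidesteps this entirely, at the cost of repeating the kernel manipulations. Your identification of $\widehat{h}\left(0^{+},-\mu_{2},\ldots,-\mu_{n};\mathbf{s}\right)$ with $\overline{B}$, including the collapse of the first marginal to $\delta_{0}\left(dx_{1}\right)$ when $u_{1}=0$, matches the paper's.
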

\begin{proof}
By the independence of $A_{t}$ and $D_{t}$ 
\begin{alignat}{1}
 & p\left(dx_{1},...,dx_{n};t_{1},...,t_{n}\right)\label{eq:multidimensional FPE of A_E_t-1}\\
 & =\integral{u_{1}=0}{\infty}\integral{u_{2}=u_{1}}{\infty}\cdots\integral{u_{n}=u_{n-1}}{\infty}z\left(dx_{1},...,dx_{n};u_{1},...,u_{n}\right)h\left(du_{1},\ldots,du_{n};t_{1},\ldots,t_{n}\right)\nonumber \\
 & =\integral{u_{1}=0}{\infty}\cdots\integral{u_{n}=u_{n-1}}{\infty}z\left(dx_{1},u_{1}\right)z\left(dx_{2}-x_{1},dx_{3}-x_{1},...,dx_{n}-x_{1};u_{2}-u_{1},u_{3}-u_{1},...,u_{n}-u_{1}\right)\nonumber \\
 & \times H_{t_{1}}\left(0,0;du_{1},dr_{1}\right)\circ H_{t_{2}-t_{1}}\left(0,r_{1};du_{2}-u_{1},dr_{2}\right)\circ\cdots H_{t_{n}-t_{n-1}}\left(0,r_{n-1};du_{n}-u_{n-1},dr_{n}\right)\circ.
\end{alignat}
Taking the FLT of $p\left(dx_{1},...,dx_{n};t_{1},...,t_{n}\right)$
and using the change of variables $u_{i}^{'}=u_{i}-u_{1}$ for $2\leq i\leq n$
we obtain 
\begin{alignat}{1}
 & \overline{p}\left(k_{1},...,k_{n};s_{1},...,s_{n}\right)\nonumber \\
 & =\integral{u_{1}=0}{\infty}\integral{t_{1}=0}{\infty}\integral{x_{1}\in\mathbb{R}}{}e^{-\left(\sum_{i=1}^{n}s_{i}\right)t_{1}-\left(i\sum_{i=1}^{n}k_{i}\right)x_{1}}z\left(dx_{1},u_{1}\right)H_{t_{1}}\left(0,0;du_{1},dr_{1}\right)\circ dt_{1}\nonumber \\
 & \times\integral{u_{2}=0}{\infty}\cdots\integral{u_{n}=u_{n-1}}{\infty}\widetilde{z}\left(k_{2},...,k_{n};u_{2},...,u_{n}\right)\hat{H}_{\sum_{i=2}^{n}s_{i}}\left(0,r_{1};du_{2},dr_{2}\right)\circ\cdots\hat{H}_{s_{n}}\left(0,r_{n-1};du_{n}-u_{n-1},dr_{n}\right)\circ\label{eq:multidimensional FPE of A_E_t-2}
\end{alignat}
Let us look at 
\begin{alignat}{1}
 & \integral{u_{1}=0}{\infty}\integral{t_{1}=0}{\infty}\integral{x_{1}\in\mathbb{R}}{}e^{-\left(\sum_{i=1}^{n}s_{i}\right)t_{1}-\left(i\sum_{i=1}^{n}k_{i}\right)x_{1}}z\left(dx_{1},u_{1}\right)H_{t_{1}}\left(0,0;du_{1},dr_{1}\right)dt_{1}\nonumber \\
 & =\integral{u_{1}=0}{\infty}\integral{t_{1}=0}{\infty}\integral{x_{1}\in\mathbb{R}}{}e^{-\left(\sum_{i=1}^{n}s_{i}\right)t_{1}-\left(i\sum_{i=1}^{n}k_{i}\right)x_{1}}z\left(dx_{1},u_{1}\right)\nonumber \\
 & \phantom{}\times\integral{\,w\in[0,t_{1}]}{}g\left(w,u_{1}\right)du_{1}K_{2}\left(dr_{1}+t_{1}-w\right)dt_{1}\nonumber \\
 & =\integral{u_{1}=0}{\infty}\integral{w=0}{\infty}\integral{x_{1}\in\mathbb{R}}{}e^{-i\left(\sum_{i=1}^{n}k_{i}\right)x_{1}-\left(\sum_{i=1}^{n}s_{i}\right)w}z\left(dx_{1},u_{1}\right)g\left(w,u_{1}\right)du_{1}\nonumber \\
 & \phantom{}\times\integral{t_{1}=0}{\infty}e^{-\left(\sum_{i=1}^{n}s_{i}\right)t_{1}}K_{2}\left(dr_{1}+t_{1}\right)dt_{1}\nonumber \\
 & =\integral{u_{1}=0}{\infty}\integral{w=0}{\infty}\integral{x_{1}\in\mathbb{R}}{}e^{u_{1}\left(\psi\left(-\sum_{i=1}^{n}k_{i}\right)+\phi\left(s\sum_{i=1}^{n}s_{i}\right)\right)}du_{1}\times\integral{t_{1}=0}{\infty}e^{-\left(\sum_{i=1}^{n}s_{i}\right)t_{1}}K_{2}\left(dr_{1}+t_{1}\right)dt_{1}\\
 & =\frac{1}{-\psi\left(-\sum_{i=1}^{n}k_{i}\right)-\phi\left(\sum_{i=1}^{n}s_{i}\right)}\integral{t_{1}=0}{\infty}e^{-\left(\sum_{i=1}^{n}s_{i}\right)t_{1}}K_{2}\left(dr_{1}+t_{1}\right)dt_{1}.\label{eq:eq:multidimensional FPE of A_E_t-3}
\end{alignat}
Plugging (\ref{eq:eq:multidimensional FPE of A_E_t-3}) in (\ref{eq:multidimensional FPE of A_E_t-2})
and using (\ref{eq:governing equation of h-2}) we have 
\begin{alignat*}{1}
\overline{p}\left(k_{1},...,k_{n};s_{1},...,s_{n}\right) & =\frac{1}{-\psi\left(-\sum_{i=1}^{n}k_{i}\right)-\phi\left(\sum_{i=1}^{n}s_{i}\right)}\\
 & \times\integral{u_{2}=0}{\infty}\cdots\integral{u_{n}=u_{n-1}}{\infty}\widetilde{z}\left(k_{2},...,k_{n};u_{2},...,u_{n}\right)\hat{h}\left(0^{+},du_{2},\ldots,du_{n};s_{1},\ldots,s_{n}\right).
\end{alignat*}
Rearranging and taking the inverse FLT we arrive at (\ref{eq:multidimensional FPE of A_E_t}).
\end{proof}
Working along similar lines to the proof of Theorem \ref{thm:governing equation of multi variable h}
one can also obtain the finite dimensional FFPEs of the process $X_{t}=A_{E_{t}}$
where $E_{t}$ is the inverse of a strictly increasing subordinator
$D_{t}$ and $\left(A_{t},D_{t}\right)$ is a L\'{e}vy process, i.e. the
processes $A_{t}$ and $D_{t}$ are not necessarily independent. More
precisely, suppose $E\left(e^{ikA_{t}-sD_{t}}\right)=e^{t\xi\left(k,s\right)}$
and that $\xi\left(k,s\right)=ibk-\frac{1}{2}ak^{2}+\integral{\mathbb{R}}{}\left(e^{iky-sw}-1-iky1_{\{\left|\left(y,w\right)\right|<1\}}\right)K\left(dy,dw\right)$
and that $\Xi$ is the operator whose symbol is $-\xi\left(-k,s\right)$.

\begin{cor}
\label{cor:MD FPE of general A_E_t} Let $\left(A_{t},D_{t}\right)$
be a L\'{e}vy process s.t $E\left(e^{ikA_{t}-sD_{t}}\right)=e^{t\xi\left(k,s\right)}$.
Let $E_{t}$ be the inverse of the strictly increasing subordinator
$D_{t}$ and let $p\left(dx_{1},...,dx_{n};t_{1},...,t_{n}\right)=P\left(X_{t_{1}}\in dx_{1},\ldots,X_{t_{n}}\in dx_{n}\right)$.
Then
\begin{alignat}{1}
\Xi_{\mathbf{x},\mathbf{t}}p\left(dx_{1},...,dx_{n};t_{1},...,t_{n}\right) & =\integral{r_{1}=0}{\infty}K\left(dx_{1},dr_{1}+t_{1}\right)\label{eq:multidimensional FPE of general A_E_t}\\
 & \phantom{}\times Q_{t_{2}-t_{1}}\left(x_{1},r_{1};dx_{2},dr_{2}\right)\circ\cdots Q_{t_{n}-t_{n-1}}\left(x_{n-1},r_{n-1};dx_{n},dr_{n}\right)\circ.\nonumber 
\end{alignat}
\end{cor}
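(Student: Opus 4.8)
The plan is to run the argument of Theorem~\ref{thm:governing equation of multi variable h}, but with the transition kernel $H_t$ of $(E_t,R_t)$ replaced by the kernel $Q_t$ of $(X_t,R_t)$ from (\ref{eq:general transition probabilities}), and with the occupation measure of $(t,E_t)$ replaced by the occupation measure $U^{x'}$ of the L\'{e}vy process $(A_t,D_t)$. First I would expand $p(d\mathbf{x};\mathbf{t})$ by (\ref{eq:Finite dimensional distribution general}) as the composition $Q_{t_1}(0,0;dx_1,dr_1)\circ Q_{t_2-t_1}(x_1,r_1;dx_2,dr_2)\circ\cdots\circ$, use the translation invariance of $Q_t$ in its spatial argument (immediate from (\ref{eq:general transition probabilities}) since $U^{x'}(dy,dw)=U^{0}(dy-x',dw)$ gives $Q_t(x',r';dx,dr)=Q_t(0,r';d(x-x'),dr)$), and take the FLT with the telescoping change of the time and space variables $\tau_1=t_1,\ \tau_j=t_j-t_{j-1}$, exactly as between (\ref{eq:FD distribution of E}) and (\ref{eq:governing equation of h}). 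With $S:=\sum_{i=1}^{n}s_i$ and $\Lambda:=\sum_{i=1}^{n}k_i$, the first kernel then carries the conjugate pair $(S,\Lambda)$ and the $j$-th factor carries $\bigl(\sum_{i\ge j}s_i,\sum_{i\ge j}k_i\bigr)$; since the first kernel has $r'=0$, only the second line of (\ref{eq:general transition probabilities}) contributes to it.

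The computational core is the FLT of the first block $\int_{t_1}\!\int_{x_1}e^{-St_1-i\Lambda x_1}Q_{t_1}(0,0;dx_1,dr_1)\,dt_1$. Inserting $Q_{t_1}(0,0;dx_1,dr_1)=\int_{y}\int_{w\in[0,t_1]}U^{0}(dy,dw)\,K(dx_1-y,dr_1+t_1-w)$ and substituting $t_1=w+\tau$, $x_1=y+v$, this factors as $\bigl(\int_{y}\int_{w}e^{-i\Lambda y-Sw}U^{0}(dy,dw)\bigr)\bigl(\int_{\tau=0}^{\infty}e^{-S\tau}\widetilde K(\Lambda,dr_1+\tau)\,d\tau\bigr)$, where $\widetilde K(\Lambda,\cdot)=\int_{v}e^{-i\Lambda v}K(dv,\cdot)$. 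By definition of $U^{0}$ the first factor equals $\mathbb{E}\int_{0}^{\infty}e^{-i\Lambda A_u-SD_u}\,du=\int_{0}^{\infty}e^{u\xi(-\Lambda,S)}\,du=\bigl(-\xi(-\Lambda,S)\bigr)^{-1}$, valid on the half-plane $\mathrm{Re}\,s_i>0$, where $\mathrm{Re}\,\xi(-\Lambda,S)<0$. Plugging this into the telescoped FLT of $p$ gives $\overline p(k_1,\dots,k_n;s_1,\dots,s_n)=\bigl(-\xi(-\Lambda,S)\bigr)^{-1}\,\overline\rho$, where $\rho(d\mathbf{x};\mathbf{t})$ is precisely the right-hand side of (\ref{eq:multidimensional FPE of general A_E_t}): running the same telescoping change of variables on $\rho$ turns its leading factor $\int_{r_1}K(dx_1,dr_1+t_1)$ into exactly $\int_{\tau=0}^{\infty}e^{-S\tau}\widetilde K(\Lambda,dr_1+\tau)\,d\tau$ after FT in $x_1$, while leaving all remaining $Q$-factors identical to those in $\overline p$; since $\bigl(-\xi(-\Lambda,S)\bigr)^{-1}$ does not depend on $r_1$ it pulls out of the $\circ$-compositions. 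Rearranging yields $-\xi(-\Lambda,S)\,\overline p=\overline\rho$.

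It then remains to invert the transform. The measure $p(d\mathbf{x};\mathbf{t})$ is of the degenerate $f_{\mathbf{k}}$-type of Subsection~\ref{sub:Pseudo-differential-operators-of}: on the event that $E$ (hence $X=A_{E}$) is frozen between consecutive times $t_{i-1},t_i$, the increment $x_i-x_{i-1}$ vanishes, so $p$ decomposes over the manifolds $A_{\mathbf{k}}^{n}$. By the construction of $\Xi_{\mathbf{x},\mathbf{t}}$ and the results of Section~\ref{sec:Pseudo-Differential-Operators-on}, $\Xi_{\mathbf{x},\mathbf{t}}$ is the PDO on such measures whose FLT-symbol is $-\xi\bigl(-\sum_i k_i,\sum_i s_i\bigr)=-\xi(-\Lambda,S)$; taking the inverse FLT of $-\xi(-\Lambda,S)\,\overline p=\overline\rho$ thus gives $\Xi_{\mathbf{x},\mathbf{t}}p(d\mathbf{x};\mathbf{t})=\rho(d\mathbf{x};\mathbf{t})$, which is (\ref{eq:multidimensional FPE of general A_E_t}).

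The main obstacle is bookkeeping rather than conceptual: one must justify the Fubini interchanges and the successive changes of variables in the first-block identity — here local finiteness of $U^{0}$, $K$ and $Q_t$ together with absolute convergence of the transforms for $\mathrm{Re}\,s_i>0$ suffices — and, more substantively, one must check that applying $\Xi_{\mathbf{x},\mathbf{t}}$ and inverting the FLT is legitimate on the $f_{\mathbf{k}}$-valued measure $p$, which is exactly the point of Section~\ref{sec:Pseudo-Differential-Operators-on} (the coupled analogue of the passage through Lemma~\ref{lem: LT of f_k for t} used in Theorem~\ref{thm:governing equation of multi variable h}). In contrast with Corollary~\ref{cor: MD FPE of A_E_t}, no boundary ($x_1\to0^{+}$) term appears, because the factor $\bigl(-\xi(-\Lambda,S)\bigr)^{-1}$ is produced directly by $U^{0}$ instead of recovered through a limit; in the independent case the boundary term of Corollary~\ref{cor: MD FPE of A_E_t} is absorbed into the $K(dx_1,dr_1+t_1)$ on the right-hand side, since then $K(dy,dw)=K_1(dy)\delta_0(dw)+\delta_0(dy)K_2(dw)$ and hence $\int_{r_1\ge0}K(dx_1,dr_1+t_1)=\delta_0(dx_1)\int_{r_1\ge0}K_2(dr_1+t_1)$.
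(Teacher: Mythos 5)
Your proposal is correct and follows essentially the same route as the paper: expand $p$ via the Chapman--Kolmogorov composition (\ref{eq:Finite dimensional distribution general}), use translation invariance of $Q_t$, take the FLT with the telescoping change of variables, and factor the first block through the occupation measure to produce $\bigl(-\xi(-\sum_i k_i,\sum_i s_i)\bigr)^{-1}$ times the transform of the right-hand side, then invert using the PDO machinery of Section \ref{sec:Pseudo-Differential-Operators-on}. Your explicit identification of $\int e^{-i\Lambda y-Sw}U^{0}(dy,dw)=\mathbb{E}\int_0^\infty e^{-i\Lambda A_u-SD_u}\,du$ and the closing remark reconciling the coupled remainder term with Corollary \ref{cor: MD FPE of A_E_t} in the independent case are correct elaborations of steps the paper leaves implicit.
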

\begin{proof}
Using (\ref{eq:general transition probabilities}) we see that $Q_{t}$
is again translation invariant with respect to the spatial variable.
Note that here
\[
U^{x'}\left(dy,dw\right)=\integral 0{\infty}v\left(dy-x',dw;u\right)du,
\]
where $v\left(dy,dw;u\right)=P\left(A_{u}\in dy,D_{u}\in dw\right)$.
Using the same ideas as in the proof of Theorem \ref{thm:governing equation of multi variable h}
we obtain
\begin{alignat}{1}
\overline{p}\left(k_{1},...,k_{n};s_{1},...,s_{n}\right) & =\integral{t_{1}=0}{\infty}\integral{x_{1}\in\mathbb{R}}{}e^{-\left(\sum_{i=1}^{n}s_{i}\right)t_{1}-\left(\sum_{i=1}^{n}k_{i}\right)x_{1}}\integral{u=0}{\infty}v\left(dy,dw;u\right)du\integral{r_{1}=0}{\infty}\integral{y\in\mathbb{R}}{}\integral{w=0}{t_{1}}K\left(dx_{1}-y,dr_{1}+t_{1}-w\right)\label{eq:multidimensional FPE of general A_E_t FLT}\\
 & \times\overline{Q}_{\sum_{i=2}^{n}s_{i}}\left(0,dr_{1};\sum_{i=2}^{n}k_{i},dr_{2}\right)\circ\cdots\overline{Q}_{s_{n}}\left(0,r_{n-1};k_{n},dr_{n}\right)\circ\nonumber \\
 & =\integral{y\in\mathbb{R}}{}\integral{w=0}{\infty}e^{-\left(\sum_{i=1}^{n}s_{i}\right)w-i\left(\sum_{i=1}^{n}k_{i}\right)y}\integral{u=0}{\infty}v\left(dy,dw;u\right)du\integral{r_{1}=0}{\infty}\integral{t_{1}=0}{\infty}\integral{x_{1}\in\mathbb{R}}{}e^{-\left(\sum_{i=1}^{n}s_{i}\right)t_{1}-\left(\sum_{i=1}^{n}k_{i}\right)x_{1}}\nonumber \\
 & \times K\left(dx_{1},dr_{1}+t_{1}\right)\overline{Q}_{\sum_{i=2}^{n}s_{i}}\left(0,dr_{1};\sum_{i=2}^{n}k_{i},dr_{2}\right)\circ\cdots\overline{Q}_{s_{n}}\left(0,r_{n-1};k_{n},dr_{n}\right)\circ\nonumber \\
 & =\frac{1}{-\xi\left(-\sum_{i=1}^{n}k_{i},\sum_{i=1}^{n}s_{i}\right)}\integral{r_{1}=0}{\infty}\integral{t_{1}=0}{\infty}\integral{x_{1}\in\mathbb{R}}{}e^{-\left(\sum_{i=1}^{n}s_{i}\right)t_{1}-\left(\sum_{i=1}^{n}k_{i}\right)x_{1}}K\left(dx_{1},dr_{1}+t_{1}\right)\nonumber \\
 & \times\overline{Q}_{\sum_{i=2}^{n}s_{i}}\left(0,dr_{1};\sum_{i=2}^{n}k_{i},dr_{2}\right)\circ\cdots\overline{Q}_{s_{n}}\left(0,r_{n-1};k_{n},dr_{n}\right)\circ.\nonumber 
\end{alignat}
Rearrange and invert to obtain (\ref{eq:multidimensional FPE of general A_E_t}).\end{proof}
\begin{rem}
When the CTRW is coupled, a distinction between the limit of the CTRW
$X'_{t}=\sum_{k=1}^{N_{t}}J_{k}$ and the Overshoot CTRW $X''_{t}=\sum_{k=1}^{N_{t}+1}J_{k}$
is needed. Indeed, in the case where the outer process $A_{t}$ and
the subordinator $D_{t}$ are dependent it has been proven in \cite{straka2011}
that the limits of the CTRW and the Overshoot CTRW are $A_{\left(E_{t}\right)-}$
and $A_{E_{t}}$ respectively.
\end{rem}

\begin{rem}
As was mentioned above, it is usually impossible to define CTRWL by
their one-dimensional FFPE. However, since Equation (\ref{eq:multidimensional FPE of general A_E_t})
and Equation (\ref{eq:multidimensional FPE of general A_E_t FLT})
are equivalent and c�dl�g processes are characterized(up to their
law) by their FDDs, we see that one can define the process $A_{E_{t}}$
by specifying all its $n$ dimensional FFPE.
\end{rem}
Our next result gives a meaning to the measure $\integral{r_{1}=0}{\infty}K\left(dx_{1},dr_{1}+t_{1}\right)$
in the context of CTRWL.
\begin{prop}
\label{prop:Remainder results}Let $A_{t}$ and $D_{t}$ as in Corollary
\ref{cor:MD FPE of general A_E_t}. Then
\begin{equation}
\frac{\partial}{\partial u}P\left(X_{t}\in dx,E_{t}\leq u\right)\overset{w}{\rightarrow}\integral{r_{1}=0}{\infty}K\left(dx_{1},dr_{1}+t_{1}\right)\qquad u\rightarrow0.\label{eq:Remainder results}
\end{equation}
\end{prop}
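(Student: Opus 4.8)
The plan is to identify $E_t$ with the (a.s.\ unique) jump time at which the bivariate L\'evy process $Z_s=(A_s,D_s)$ first pushes its $D$-coordinate above the level $t$, and then to apply the compensation formula for the jumps of $Z$. The structural input is that, since the exponent $\xi$ of Corollary~\ref{cor:MD FPE of general A_E_t} carries no linear-in-$s$ term and $\int K(\mathbb{R}\times dw)=\infty$, the subordinator $D$ increases only by jumps and a.s.\ does not hit the fixed level $t$; hence a.s.\ $D_{(E_t)-}<t<D_{E_t}$, so $E_t$ is a jump time of $Z$ with $D_{(E_t)-}\le t$, and it is the \emph{only} such jump time because $D$ is non-decreasing. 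Consequently, for $\varphi\in C_b(\mathbb{R})$ and $u>0$,
\[
\varphi(X_t)\,1_{\{E_t\le u\}}=\sum_{s\le u}\varphi(A_{s-}+\Delta A_s)\,1_{\{D_{s-}\le t\}}\,1_{\{D_{s-}+\Delta D_s>t\}}\qquad\text{a.s.},
\]
the right-hand sum having at most one non-zero term, achieved at $s=E_t$ (when $E_t\le u$), where it equals $\varphi(A_{E_t})=\varphi(X_t)$. Taking expectations and invoking the compensation formula for the jump measure of $Z$ (whose intensity is $ds\,K(da,db)$), and writing $v(dy,dw;u)=P(A_u\in dy,D_u\in dw)$, one gets
\[
E\big[\varphi(X_t);E_t\le u\big]=\int_0^u\!\!\int_{\mathbb{R}^2}E\big[\varphi(A_v+a)\,1_{\{D_v\le t\}}\,1_{\{b>t-D_v\}}\big]\,K(da,db)\,dv .
\]
This is the time-truncated analogue of the occupation-measure representation~(\ref{eq:general transition probabilities}); in particular $u\mapsto E[\varphi(X_t);E_t\le u]$ is absolutely continuous, with
\[
\frac{\partial}{\partial u}E\big[\varphi(X_t);E_t\le u\big]=E\big[G(A_u,D_u)\,1_{\{D_u\le t\}}\big],\qquad G(y,w):=\int_{\mathbb{R}^2}\varphi(y+a)\,1_{\{b>t-w\}}\,K(da,db).
\]

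It then remains to let $u\downarrow0$. Since $D$ is non-decreasing, $(A_u,D_u)\to(0,0)$ a.s.\ along any sequence $u\downarrow0$, with $D_u\downarrow0$; and for $0<w<t$ one has $G(y,w)=\int\varphi(y+a)\,K\big(da\times(t-w,\infty)\big)$, of total mass at most $\|\varphi\|_\infty\,\overline\Pi_D(t-w)$, where $\overline\Pi_D(s):=K(\mathbb{R}\times(s,\infty))<\infty$ for $s>0$. Dominated convergence inside the integral defining $G$ (its integrands being bounded by the $K$-integrable function $\|\varphi\|_\infty 1_{\{b>t-w_0\}}$ whenever $w\le w_0<t$) gives $G(y,w)\to\int\varphi(a)\,1_{\{b\ge t\}}\,K(da,db)=\int_{r=0}^{\infty}\!\int\varphi(x)\,K(dx,dr+t)$ as $(y,w)\to(0,0)$ with $w\downarrow0$, hence $G(A_u,D_u)\,1_{\{D_u\le t\}}\to G(0,0)$ a.s. To pass this convergence under the expectation one needs uniform integrability of $\{G(A_u,D_u)\,1_{\{D_u\le t\}}\}_{0<u\le1}$: bounding it by $\|\varphi\|_\infty\,\overline\Pi_D(t-D_u)\,1_{\{D_u\le t\}}$ and splitting on $\{D_u\le t/2\}$ (where it is $\le\|\varphi\|_\infty\overline\Pi_D(t/2)$) and on $\{t/2<D_u\le t\}$, the second part is controlled by combining $P(D_u>t/2)\to0$ with $\int_0^1 s\,\Pi_D(ds)<\infty$ (so that $s\,\overline\Pi_D(s)\to0$) and the local boundedness near $t$ of the transition density $g(\cdot,u)$ of $D_u$ that is built into the standing hypotheses. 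This yields $\partial_u E[\varphi(X_t);E_t\le u]\to\int\varphi(x)\big(\int_{r=0}^\infty K(dx,dr+t)\big)$ for every $\varphi\in C_b(\mathbb{R})$, which is exactly the weak convergence~(\ref{eq:Remainder results}).

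The hard part will be this final interchange of limit and expectation: the dominating function $\overline\Pi_D(t-D_u)$ blows up as $D_u\uparrow t$, so a genuine uniform-integrability estimate on the law of $D_u$ near the level $t$ is unavoidable; the crossing-jump identity, the compensation formula, and the differentiation in $u$ are routine by comparison. A useful consistency check is $\varphi\equiv1$: the identity collapses to $\partial_u P(D_u>t)=E[\overline\Pi_D(t-D_u);D_u\le t]$, whose limit as $u\downarrow0$ is $K(\mathbb{R}\times[t,\infty))$, in agreement with the total mass of $\int_{r=0}^\infty K(\cdot,dr+t)$ on the right-hand side of~(\ref{eq:Remainder results}).
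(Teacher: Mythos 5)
Your proposal is correct and arrives at the same pivotal identity as the paper,
\[
\frac{\partial}{\partial u}P\left(X_{t}\in dx,\,E_{t}\leq u\right)=\int_{y\in\mathbb{R}}\int_{w=0}^{t}v\left(dy,dw;u\right)\,K\left(dx-y,[t-w,\infty)\right),
\]
but by a genuinely different route. The paper obtains the joint law of $\left(X_{t},E_{t}\right)$ by applying the cited Meerschaert--Straka transition formula (Equation (\ref{eq:general transition probabilities})) to the space--time augmented outer process $A'_{t}=\left(A_{t},t\right)$, for which $A'_{E_{t}}=\left(X_{t},E_{t}\right)$, and then reading off the augmented occupation and L\'{e}vy measures as $v'=v\,\delta_{u}$ and $K'=K\,\delta_{0}$; differentiation in $u$ is then immediate. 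You instead rederive this identity from scratch via the no-creeping property of the driftless subordinator $D$, the representation of $\varphi\left(X_{t}\right)1_{\left\{ E_{t}\leq u\right\} }$ as a sum over the unique level-crossing jump of $\left(A,D\right)$, and the compensation formula --- a self-contained argument that makes the appearance of the occupation and L\'{e}vy measures transparent, at the cost of reproving what the paper simply cites. On the limit $u\downarrow0$ you are in fact more careful than the paper: the integrand has total mass $K_{2}\left([t-w,\infty)\right)$, which blows up as $w\uparrow t$, so the paper's bare appeal to $v\left(\cdot;u\right)\overset{w}{\rightarrow}\delta_{\left(0,0\right)}$ together with continuity of $K$ does not by itself license passing to the limit, and your uniform-integrability discussion targets exactly this difficulty. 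The one soft spot in your sketch is the control of the event $\left\{ t/2<D_{u}\leq t\right\} $: the ``local boundedness of $g\left(\cdot,u\right)$ near $t$'' you invoke is not among the paper's standing hypotheses (only that $g\left(dw,u\right)$ has no atoms is used), so that estimate still requires an argument --- for instance via the classical small-time asymptotic $u^{-1}P\left(\left(A_{u},D_{u}\right)\in B\right)\rightarrow K\left(B\right)$ for Borel sets $B$ bounded away from the origin with $K\left(\partial B\right)=0$, which would also deliver the conclusion rather directly. This caveat aside, your approach is sound, and where it differs from the paper's the difference is one of derivation rather than of substance.
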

\begin{proof}
Let $A'_{t}=\left(A_{t},t\right)$ and note that $A'_{E_{t}}=\left(X_{t},E_{t}\right)$
. Using \cite[Equation 4.4]{Meerschaert} (which is Equation (\ref{eq:general transition probabilities})
for outer process in $\mathbb{R}^{d}$) for every $x_{1}\in\mathbb{R}$
we have
\begin{alignat}{1}
P\left(X_{t}\in(-\infty,x_{1}],E_{t}\leq q\right) & =\integral{y_{1}\in\mathbb{R}}{}\integral{y_{2}\in\mathbb{R}}{}\integral{w=0}t\integral{u=0}{\infty}\integral{r=0}{\infty}v'\left(dy_{1},dy_{2},dw;u\right)duK'\left((-\infty,x_{1}-y_{1}],dx_{2}-y_{2},r+t-w\right).\label{eq:Remainder results 1}
\end{alignat}
It is not hard to see that here 
\begin{alignat}{1}
v'\left(dy_{1},dy_{2},dw;u\right) & =v\left(dy_{1},dw;u\right)\delta_{u}\left(dy_{2}\right)\label{eq:Remainder results 2}\\
K'\left(dx_{1},dx_{2},dw\right) & =K\left(dx_{1},dw\right)\delta_{0}\left(dx_{2}\right).\nonumber 
\end{alignat}
Plugging (\ref{eq:Remainder results 2}) in (\ref{eq:Remainder results 1})
we have
\begin{alignat}{1}
 & \integral{y_{1}\in\mathbb{R}}{}\integral{y_{2}\in\mathbb{R}}{}\integral{w=0}t\integral{u=0}{\infty}v\left(dy_{1},dw;u\right)\delta_{u}\left(dy_{2}\right)duK\left((-\infty,x_{1}-y_{1}],[t-w,\infty)\right)\delta_{0}\left(dx_{2}-y_{2}\right)\nonumber \\
 & =\integral{y_{1}\in\mathbb{R}}{}\integral{w=0}t\integral{u=0}{\infty}v\left(dy_{1},dw;u\right)duK\left((-\infty,x_{1}-y_{1}],[t-w,\infty)\right)\delta_{u}\left(dx_{2}\right).\label{eq:Remainder results3}
\end{alignat}
Integrating w.r.t $x_{2}$ on $[0,q]$ for some $q>0$ we have
\begin{alignat}{1}
P\left(X_{t}\in(-\infty,x_{1}],E_{t}\leq q\right) & =\integral{y_{1}\in\mathbb{R}}{}\integral{w=0}t\integral{u=0}{\infty}v\left(dy_{1},dw;u\right)duK\left((-\infty,x_{1}-y_{1}],[t-w,\infty)\right)1_{\left\{ u\leq q\right\} }\nonumber \\
 & =\integral{y_{1}\in\mathbb{R}}{}\integral{w=0}t\integral{u=0}qv\left(dy_{1},dw;u\right)duK\left((-\infty,x_{1}-y_{1}],[t-w,\infty)\right).\label{eq:Remainder results4}
\end{alignat}
Taking derivative w.r.t $q$ we have
\[
\frac{\partial}{\partial q}P\left(X_{t}\in(-\infty,x_{1}],E_{t}\leq q\right)=\integral{y_{1}\in\mathbb{R}}{}\integral{w=0}tv\left(dy_{1},dw;q\right)duK\left((-\infty,x_{1}-y_{1}],[t-w,\infty)\right).
\]
The measure $K\left(dx_{1},dw\right)$ is continuous because $K\left(\mathbb{R},dw\right)=K_{2}\left(dw\right)$
is continuous. Letting $q\rightarrow0$ we see that $v\left(dy_{1},dw;q\right)\overset{w}{\rightarrow}\delta_{\left(0,0\right)}\left(dy_{1},dw\right)$
and hence $\frac{\partial}{\partial q}P\left(X_{t}\in(-\infty,x_{1}],E_{t}\leq q\right)\rightarrow K\left((-\infty,x_{1}],[t,\infty)\right)$
as $q\rightarrow0$ which is equivalent to (\ref{eq:Remainder results}).
\end{proof}
Proposition \ref{prop:Remainder results} sheds light on the ``remainder''
term that appears in Corollary \ref{cor: MD FPE of A_E_t} and Corollary
\ref{cor:MD FPE of general A_E_t}. It appears that using a so-called
multidimensional RL PDO in the finite dimensional FFPE one is left
with a term that accounts for the portion of particles that have not
been mobilized since $t=0$. More philosophically, if we think of
the value of $E_{t}$ as the number of mobilizations of the process
by time $t$, then $\frac{\partial}{\partial u}P\left(E_{t}\leq u\right)$
is the ratio between the portion of particles $\Delta m$ that experienced
between $u$ and $u+\mbox{\ensuremath{\Delta}}u$ mobilizations up
to time $t$. Evaluating $\frac{\partial}{\partial u}P\left(E_{t}\leq u\right)$
at $u=0$ is then the ratio between the portion of particles $\Delta m$
that experienced an infinitesimal number of mobilizations $\Delta u$
by time $t$ and $\Delta u$. If $\frac{\partial}{\partial u}P\left(E_{t}\leq u\right)|_{u=0}$
is big then the diffusion becomes very dynamic at time $t$ as many
particles get loose and ``take part'' in the diffusion. Considering
now $\frac{\partial}{\partial u}P\left(X_{t}\in dx,E_{t}\leq u\right)|_{u=0}$
we see that since $X_{t}$ is the limit of the Overshooting CTRW,
where a jump precedes a waiting time the position of the particle
that has been ``stuck'' until time $t$ depends on that first jump
in space. It that context is worthwhile to compare this to \cite[Equation 4.2]{Jurlewicz2012},
the dynamics of the coupled CTRWL where the jump in space succeeds
that in time. The ``remainder'' term therefore accounts for the
Finite dimensional dynamics that only ``kick in'' at time $t$.

\section{\label{sec:Examples}Examples}

Theorem \ref{thm:governing equation of multi variable h} as well
as Corollary \ref{cor: MD FPE of A_E_t} and Corollary \ref{cor:MD FPE of general A_E_t}
should be compared with their one-dimensional counterparts to gain
a better understanding of the dynamics of the processes whose distributions
govern the FFPE. We start with a specific case of the one dimensional
analogue of Theorem \ref{thm:governing equation of multi variable h}.
\begin{example}
Let $D_{t}$ be a standard stable subordinator of index $0<\alpha<1$,
i.e. $E\left(e^{-sD_{t}}\right)=e^{t\left(-s^{\alpha}\right)}$. Its
inverse $E_{t}$ has a distribution $h\left(x,t\right)$ which satisfies
( \cite[Equation 5.5]{Meerschaert2013}) 
\[
\mathfrak{D}_{t}^{\alpha}h\left(x,t\right)=-\mathbb{D}_{x}h\left(x,t\right),
\]
on $x,t>0$. Since here $\phi\left(s\right)=-s^{\alpha}$, we see
that $\Phi_{t}=\mathfrak{D}_{t}^{\alpha}$.
\end{example}
Next we look at the one dimensional analogue of Corollary \ref{cor: MD FPE of A_E_t}. 
\begin{example}
Again we let $D_{t}$ be a standard stable subordinator of index $0<\alpha<1$
, and $A_{t}$ be a L\'{e}vy process s.t $E\left(e^{ikA_{t}}\right)=e^{t\psi\left(k\right)}$.
Then the distribution $p\left(dx,t\right)$ of $A_{E_{t}}$ satisfies
( \cite[Equation 5.6]{Meerschaert2013})
\begin{equation}
\mathfrak{D}_{t}^{\alpha}p\left(dx,t\right)=\Psi_{x}p\left(dx,t\right)+\frac{t^{-\alpha}}{\Gamma\left(1-\alpha\right)}\delta_{0}\left(dx\right).\label{eq:one dimensional FPE of A_E_t}
\end{equation}
To see why (\ref{eq:multidimensional FPE of A_E_t}) can be thought
of as a generalization of (\ref{eq:one dimensional FPE of A_E_t})
note that $h\left(0^{+},t\right)=\frac{t^{-\alpha}}{\Gamma\left(1-\alpha\right)}$
(\cite[Equation 4.3]{Meerschaert2013}) and rewrite (\ref{eq:one dimensional FPE of A_E_t})
as
\[
\mathfrak{D}_{t}^{\alpha}p\left(x,t\right)=\Psi_{x}p\left(x,t\right)+\delta_{0}\left(dx\right)h\left(0^{+},t\right).
\]

\end{example}
Our last example concerns the one dimensional analogue of Corollary
\ref{cor:MD FPE of general A_E_t}. 
\begin{example}
Let $\left(A_{t},D_{t}\right)$ be a L\'{e}vy process as in Corollary
\ref{cor:MD FPE of general A_E_t}. Then its one dimensional distribution
$p\left(dx,t\right)$ satisfies 
\begin{equation}
\Xi_{x,t}p\left(dx,t\right)=\integral{r=0}{\infty}K\left(dx,dr+t\right).\label{eq:one dimensional FPE of uncoupled A_E_t}
\end{equation}
This was shown in \cite[Theorem 4.1]{Jurlewicz2012}. \end{example}
\begin{rem}
In \cite[Equation 5.9]{baule2005joint}, using different methods,
Baule and Friedrich essentially obtained Equation (\ref{eq:governing equation of multi variable h})
for the case where $D_{t}$ is a standard stable subordinator. In
\cite[Equation 14]{baule2007fractional} Baule and Friedrich obtained
Equation (\ref{eq:multidimensional FPE of A_E_t})(uncoupled case)
for the two dimensional case where $D_{t}$ is a standard stable subordinator. 
\end{rem}

\section{Directional Pseudo-Differential Operators \label{sec:Pseudo-Differential-Operators-on}}

In this section we wish to give a meaning to the PDO $\Psi_{\mathbf{x}}$,$\Phi_{\mathbf{t}}$
and $\Xi_{\mathbf{x},\mathbf{t}}$ discussed earlier. We shall see
that they are directional versions of their one-dimensional counterparts
$\Psi_{x}$,$\Phi_{t}$ and $\Xi_{x,t}$. We shall focus on $\Xi_{\mathbf{x},\mathbf{t}}$
as it is a generalization of $\Psi_{\mathbf{x}}$,$\Phi_{\mathbf{t}}$.
To illustrate the kind of results we are looking for, let us look
at the next simple example. Assume we have the following equation
in $\mathbb{\mathbb{R}}^{2}$
\begin{equation}
\left(\frac{\partial}{\partial x_{1}}+\frac{\partial}{\partial x_{2}}\right)f\left(x_{1},x_{2}\right)=h\left(x_{1},x_{2}\right).\label{eq:directional PDE}
\end{equation}
By using the change of variables $\left(x_{1},x_{2}\right)^{T}=\mathcal{T}\left(x'_{1},x'_{2}\right)^{T}$
where $\mathcal{T}$=$\left(\begin{array}{cc}
1 & 0\\
1 & 1
\end{array}\right)$ we can rewrite equation (\ref{eq:directional PDE}) as
\[
\frac{\partial}{\partial x'_{1}}f\left(\mathcal{T}\mathbf{x}'\right)=h\left(\mathcal{T}\mathbf{x}'\right).
\]
If we think of the change of variables $\mathcal{T}$ as an operator
on functions, i.e. $\mathcal{T}f:=f\left(\mathcal{T}\mathbf{x}\right)$
we see that
\[
\left(\frac{\partial}{\partial x_{1}}+\frac{\partial}{\partial x_{2}}\right)=\mathcal{T}^{-1}\frac{\partial}{\partial x'_{1}}\mathcal{T}.
\]
Since the operator $\mathbb{D}_{\mathbf{x}}$ is the classic one-dimensional
derivative under the change of variables $\mathcal{T}$ we say that
it is a directional version of the classic derivative. We wish to
show a similar result, i.e. that if $\xi\left(-k,s\right)$ is a L\'{e}vy
symbol, and therefore a symbol of a one-dimensional PDO $\Xi_{x,t}$,
then $\xi\left(-\sum_{i=1}^{n}k_{i},\sum_{i=1}^{n}s_{i}\right)$ is
the symbol of a PDO $\Xi_{\mathbf{x},\mathbf{t}}$ that is a directional
version of $\Xi_{x,t}$. \\
In \cite{baeumer2005space}, the authors showed that if $\xi\left(k,s\right)$
is a L\'{e}vy symbol then it is the symbol of a PDO on a Banach space.
More precisely, Let $X=L_{\omega}^{1}\left(\mathbb{R}\times\mathbb{R}_{+}\right)$
be the space of measurable functions s.t $\left\Vert f\right\Vert _{\omega}=\integral{\mathbb{R}}{}\integral{\mathbb{R}_{+}}{}\left|f\left(x,t\right)\right|e^{-\omega t}dtdx<\infty$
where $\omega>0$ is fixed. With this norm, the space $X$ is a Banach
space and the FLT is defined for each $f\in X$ for $k\in\mathbb{R},s\in(\omega,\infty)$.
Let $\xi\left(k,s\right)$ be a L\'{e}vy symbol, then it was shown that
$\xi\left(-k,s\right)$ is the symbol of the generator $L$ of a Feller
semigroup on $X$. Moreover, the domain of $L$ is given by 
\[
D\left(L\right)=\left\{ f\in X:\xi\left(-k,s\right)\overline{f}\left(k,s\right)=\overline{h}\left(k,s\right),\exists h\in X\right\} .
\]
\\
Let $L_{\bm{\omega}}^{1}\left(\mathbb{R}^{n}\times\mathbb{R}^{n}\right)$
denote the space of measurable functions that are defined on $\mathbb{R}^{n}\times\mathbb{R}^{n}$
and 
\[
\integral{\mathbb{R}^{n}}{}\integral{\mathbb{R}^{n}}{}\left|f\left(\mathbf{x},\mathbf{t}\right)\right|e^{-\left\langle \bm{\omega},\mathbf{t}\right\rangle }d\mathbf{x}d\mathbf{t}<\infty,
\]
for some $\bm{\omega}\in\mathbb{R}_{+}^{n}$. Let $\mathcal{A}_{n}\subset L_{\bm{\omega}}^{1}\left(\mathbb{R}^{n}\times\mathbb{R}^{n}\right)$
be the set of functions that vanish outside $\mathbb{R}^{n}\times A^{n}$
where $A^{n}=\left\{ \mathbf{t}:0<t_{1}\leq t_{2}\leq\ldots\leq t_{n}<\infty\right\} $.
Note that $\mathcal{A}_{n}$ is itself a Banach space and that the
FLT of $f\in\mathcal{A}_{n}$ is defined for $\mathbf{k}\in\mathbb{R}^{n}$,
$\mathbf{s}\in\hat{A}^{n}:=\left\{ \mathbf{s}:\omega_{i}<s_{i}\right\} $.
Let $\mathcal{T}$ be the element of $\GL_{n}$($n\times n$ invertible
matrices) s.t its first column is $\left(1,1,...,1\right)^{T}$ and
its $i$'th column $c_{i}\left(j\right)$ is $\delta_{i}\left(j\right)$(1
if $i=j$ and zero otherwise) for $1<i\leq n$. Note that $\det\left(\mathcal{T}\right)=1$
so that $\mathcal{T}$ is a bijection from the set $B^{n}=\left\{ \left(x_{1},x_{2},...,x_{n}\right):x_{1}>0,0\leq x_{2}\leq x_{3}\leq...\leq x_{n}\right\} $
onto $A^{n}$. We introduce the change of variables $\mathcal{T}\mathbf{x}'=\mathbf{x}$
and $\mathcal{T}\mathbf{t}'=\mathbf{t}$ and see that for $f\in\mathcal{A}_{n}$
\begin{alignat}{1}
\integral{\mathbb{R}^{n}}{}\integral{\mathbb{R}_{+}^{n}}{}\left|f\left(\mathbf{x},\mathbf{t}\right)\right|e^{-\left\langle \bm{\omega},\mathbf{t}\right\rangle }d\mathbf{x}d\mathbf{t} & =\integral{\mathbb{R}^{n}}{}\integral{\mathbb{R}_{+}^{n}}{}\left|f\left(\mathcal{T}\mathbf{x}',\mathbf{\mathcal{T}\mathbf{t}}'\right)\right|e^{-\left\langle \bm{\omega},\mathcal{T}\mathbf{t}'\right\rangle }det(\mathcal{T})^{2}d\mathbf{x}'d\mathbf{t}'\label{eq:Isometry}\\
 & =\integral{\mathbb{R}^{n}}{}\integral{\mathbb{R}_{+}^{n}}{}\left|f\left(\mathcal{T}\mathbf{x}',\mathbf{\mathcal{T}\mathbf{t}}'\right)\right|e^{-\left\langle \mathcal{T}^{*}\bm{\omega},\mathbf{t}'\right\rangle }d\mathbf{x}'d\mathbf{t}',\nonumber 
\end{alignat}
where $\mathcal{T}^{*}$is the adjoint of $\mathcal{T}$. Note that
$\mathcal{T}^{*}$ is the matrix whose first row is $\left(1,1,...,1\right)$
and its $i$'th row $r_{i}\left(j\right)$ is $\delta_{i}\left(j\right)$
for $1<i\leq n$. Let $\mathcal{B}_{n}\subset L_{\mathcal{T}^{*}\bm{\omega}}^{1}\left(\mathbb{R}^{n}\times\mathbb{R}^{n}\right)$
be the subspace of functions that vanish outside $\mathbb{R}^{n}\times B^{n}$
and note that it is a Banach space w.r.t the norm $\left\Vert f\right\Vert _{\mathcal{T}^{*}\bm{\omega}}$
and that its FLT is defined for \textbf{$\left(\mathbf{k};\mathbf{s}\right)\in\mathbb{R}^{n}\times\hat{B}^{n}$}
where $\hat{B}^{n}:=\mathcal{T}^{*}\hat{A}^{n}$. We can now define
the operator $\mathcal{T}:\mathcal{A}_{n}\rightarrow\mathcal{B}_{n}$
by $\left(\mathcal{T}f\right)\left(\mathbf{x}',\mathbf{t}'\right)=f\left(\mathcal{T}\mathbf{x}',\mathcal{T}\mathbf{t}'\right)$.
Note that by (\ref{eq:Isometry}) $\mathcal{T}$ is an isometric isomorphism
from $\mathcal{A}_{n}$ to $\mathcal{B}_{n}$ and that if $fg\in\mathcal{A}_{n}$
then $\mathcal{T}\left(fg\right)=\mathcal{T}\left(f\right)\mathcal{T}\left(g\right)\in\mathcal{B}_{n}$.
We abuse notation and define the operator $\Xi_{x,t}:\mathcal{B}^{n}\rightarrow\mathcal{B}^{n}$
by $f\left(\mathbf{x};\mathbf{t}\right)\mapsto\Xi_{x,t}f\left(\cdot,x_{2},...,x_{n};\cdot,t_{2},...,t_{n}\right)$.
Finally, we define the operator $\Xi_{\mathbf{x},\mathbf{t}}:\mathcal{A}^{n}\rightarrow\mathcal{A}^{n}$
by $\Xi_{\mathbf{x},\mathbf{t}}=\mathcal{T}^{-1}\Xi_{x,t}\mathcal{T}$.
\begin{prop}
\label{prop: Multi-dimensional PDO}Let $\xi\left(-k,s\right)$ be
a L\'{e}vy symbol of the on dimensional PDO $\Xi_{x,t}$, then $\Xi_{\mathbf{x},\mathbf{t}}$
is a PDO with symbol $\xi\left(-\sum_{i=1}^{n}k_{i},\sum_{i=1}^{n}s_{i}\right)$
and its domain is 
\begin{equation}
D\left(\Xi_{\mathbf{x},\mathbf{t}}\right)=\left\{ f\in\mathcal{A}_{n}:\xi\left(-\sum_{i=1}^{n}k_{i},\sum_{i=1}^{n}s_{i}\right)\overline{f}\left(\mathbf{k},\mathbf{s}\right)=\overline{h}\left(\mathbf{k},\mathbf{s}\right),\exists h\in\mathcal{A}_{n}\right\} .\label{eq:Domain of directional version PDO}
\end{equation}
\end{prop}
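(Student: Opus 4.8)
The plan is to reduce the statement to the one-dimensional result of \cite{baeumer2005space} by means of the isometric isomorphism $\mathcal{T}:\mathcal{A}_n\to\mathcal{B}_n$, the only genuinely new ingredient being a parametrized (Bochner-space) version of that result describing the slicewise operator $\Xi_{x,t}$ on $\mathcal{B}_n$.

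First I would record how the FLT transforms under $\mathcal{T}$. For $f\in\mathcal{A}_n$, applying the same change of variables as in (\ref{eq:Isometry}), namely $\mathbf{x}=\mathcal{T}\mathbf{x}'$ and $\mathbf{t}=\mathcal{T}\mathbf{t}'$, in the integral defining $\overline{\mathcal{T}f}$ and using $\det\mathcal{T}=1$, one gets $\overline{\mathcal{T}f}\bigl(\mathcal{T}^{*}\mathbf{k},\mathcal{T}^{*}\mathbf{s}\bigr)=\overline{f}\bigl(\mathbf{k},\mathbf{s}\bigr)$ for every $(\mathbf{k},\mathbf{s})\in\mathbb{R}^{n}\times\hat{A}^{n}$. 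Since the first row of $\mathcal{T}^{*}$ is $(1,\dots,1)$ and its $i$-th row for $i>1$ is the $i$-th standard basis vector, $(\mathcal{T}^{*}\mathbf{k})_{1}=\sum_{i=1}^{n}k_{i}$ and $(\mathcal{T}^{*}\mathbf{k})_{i}=k_{i}$ for $i>1$ (and likewise for $\mathbf{s}$), so $\mathcal{T}^{*}$ is exactly the change of frequency variables carrying $\hat{A}^{n}$ onto $\hat{B}^{n}$.

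The main step will be to analyse $\Xi_{x,t}$ on $\mathcal{B}_n$. Writing the weight $e^{-\langle\mathcal{T}^{*}\bm{\omega},\mathbf{t}'\rangle}$ as $e^{-(\sum_{i}\omega_{i})t_{1}'}\prod_{i>1}e^{-\omega_{i}t_{i}'}$ and using that $B^{n}$ imposes no constraint between $t_{1}'$ and $(t_{2}',\dots,t_{n}')$, I would identify $\mathcal{B}_n$ with the Bochner space $L^{1}\bigl(\mathbb{R}^{n-1}\times\{0\le t_{2}'\le\dots\le t_{n}'\},\ \textstyle\prod_{i>1}e^{-\omega_{i}t_{i}'}\,;\,X\bigr)$ of $X$-valued functions of the parameters $(x_{2}',\dots,x_{n}';t_{2}',\dots,t_{n}')$, with $X=L^{1}_{\,\sum_{i}\omega_{i}}(\mathbb{R}\times\mathbb{R}_{+})$ carrying the first spatial and first time variable. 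By \cite{baeumer2005space}, $\xi(-k,s)$ is the symbol of the generator $L$ of a Feller semigroup $\{T(t)\}_{t\ge0}$ on $X$; then the slicewise operators $\mathrm{Id}\otimes T(t)$ form a $C_{0}$-semigroup on $\mathcal{B}_n$ whose generator applies $L$ slicewise, with domain those $g$ whose slices lie in $D(L)$ for a.e.\ parameter and whose slicewise images again define an element of $\mathcal{B}_n$ --- this is exactly the $\Xi_{x,t}$ of the statement. Because the FLT on $\mathcal{B}_n$ factors (Fubini) as the FLT in the parameters composed with the FLT on $X$ in $(x_{1}',t_{1}')$, and $L$ multiplies the FLT of a slice by $\xi(-k_{1}',s_{1}')$, I would conclude that $\Xi_{x,t}$ is a PDO on $\mathcal{B}_n$ with symbol $\xi(-k_{1}',s_{1}')$ and domain $\{g\in\mathcal{B}_n:\xi(-k_{1}',s_{1}')\overline{g}(\mathbf{k}',\mathbf{s}')=\overline{h}(\mathbf{k}',\mathbf{s}'),\ \exists\,h\in\mathcal{B}_n\}$. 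I expect this identification to be the main obstacle: one must check that the slicewise operator is closed and that the FLT-characterised set is not merely contained in, but equal to, its domain. The $C_{0}$-semigroup argument supplies closedness, and equality follows by a density argument reducing to simple tensors $\varphi(x_{1}',t_{1}')\psi(x_{2}',\dots,t_{n}')$, on which the transforms and the action of $\Xi_{x,t}$ are computed directly.

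Finally I would conjugate. Because $\mathcal{T}$ is an isometric isomorphism, $\Xi_{\mathbf{x},\mathbf{t}}=\mathcal{T}^{-1}\Xi_{x,t}\mathcal{T}$ is closed and densely defined on $\mathcal{A}_n$ with $D(\Xi_{\mathbf{x},\mathbf{t}})=\mathcal{T}^{-1}D(\Xi_{x,t})$. For $f\in D(\Xi_{\mathbf{x},\mathbf{t}})$ and $h:=\Xi_{\mathbf{x},\mathbf{t}}f$ one has $\mathcal{T}h=\Xi_{x,t}\mathcal{T}f$, hence $\overline{\mathcal{T}h}(\mathbf{k}',\mathbf{s}')=\xi(-k_{1}',s_{1}')\,\overline{\mathcal{T}f}(\mathbf{k}',\mathbf{s}')$; specialising $(\mathbf{k}',\mathbf{s}')=(\mathcal{T}^{*}\mathbf{k},\mathcal{T}^{*}\mathbf{s})$ and invoking the transformation rule together with $k_{1}'=\sum_{i}k_{i}$, $s_{1}'=\sum_{i}s_{i}$ gives $\overline{h}(\mathbf{k},\mathbf{s})=\xi\bigl(-\sum_{i=1}^{n}k_{i},\sum_{i=1}^{n}s_{i}\bigr)\overline{f}(\mathbf{k},\mathbf{s})$, which is the asserted symbol. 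The same substitution turns the FLT description of $D(\Xi_{x,t})$ into the description (\ref{eq:Domain of directional version PDO}) of $D(\Xi_{\mathbf{x},\mathbf{t}})$, finishing the argument.
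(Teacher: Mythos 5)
Your proposal is correct and follows essentially the same route as the paper: conjugate the one-dimensional operator of \cite{baeumer2005space} by the isometric isomorphism $\mathcal{T}$, use the transformation rule $\overline{\mathcal{T}f}\left(\mathcal{T}^{*}\mathbf{k},\mathcal{T}^{*}\mathbf{s}\right)=\overline{f}\left(\mathbf{k},\mathbf{s}\right)$ together with $\left(\mathcal{T}^{*}\mathbf{k}\right)_{1}=\sum_{i}k_{i}$, and identify the domain via $\mathcal{T}D\left(\Xi_{\mathbf{x},\mathbf{t}}\right)=D\left(\Xi_{x,t}\right)$. The only difference is that you spell out the slicewise analysis of $\Xi_{x,t}$ on $\mathcal{B}_{n}$ (Bochner-space identification, tensorized semigroup, closedness, and the density/Fubini argument for the FLT description of its domain), a step the paper treats in one sentence by appealing to \eqref{eq:In L_omega} and \cite{baeumer2005space}.
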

\begin{proof}
. By (\ref{eq:Isometry}) and Fubini's Theorem we have
\begin{equation}
\integral{\mathbb{R}}{}\integral{\mathbb{R}_{+}}{}\left|\mathcal{T}f\left(x_{1},x_{2},...,x_{n};t_{1},...,t_{n}\right)\right|e^{-\left\langle \mathcal{T}^{*}\bm{\omega},\left(t_{1},t_{2},...,t_{n}\right)\right\rangle }dx'_{1}dt'_{1}<\infty,\label{eq:In L_omega}
\end{equation}
and we see that $\mathcal{T}f\left(\cdot,x_{2},...,x_{n};\cdot,t_{2},...,t_{n}\right)\in L_{\sum_{i=1}^{n}\omega_{i}}^{1}\left(\mathbb{R}\times\mathbb{R}\right)$
for almost every $\left(x_{2},...,x_{n};t_{2},...,t_{n}\right)\in\mathbb{R}^{n-1}\times\mathbb{R}^{n-1}$.
On the other hand, introducing $\mathcal{T}^{*}\mathbf{k}=\mathbf{k}'$
and $\mathcal{T}^{*}\mathbf{s}=\mathbf{s}'$ %
\begin{comment}
and noting that $\mathcal{T}^{*}$ defines a bijection from $A_{n}$
to $C_{n}=\left\{ \mathbf{t}:0<t_{2}<t_{3}<...t_{n}<t_{1}\right\} $
\end{comment}
{} for $\left(\mathbf{k}';\mathbf{s}'\right)\in\mathbb{R}^{n}\times\hat{B}^{n}$
and $f\in\mathcal{A}_{n}$ we have, 
\begin{alignat*}{1}
\overline{f}\left(\left(\mathcal{T}^{*}\right)^{-1}\mathbf{\mathbf{k}}',\left(\mathcal{T}^{*}\right)^{-1}\mathbf{s}'\right) & =\integral{\mathbb{R}^{n}}{}\integral{\mathbb{R}_{+}^{n}}{}e^{-i\left\langle \left(\mathcal{T}^{*}\right)^{-1}\mathbf{k'},\mathbf{x}\right\rangle -\left\langle \left(\mathcal{T}^{*}\right)^{-1}\mathbf{s'},\mathbf{t}\right\rangle }f\left(\mathbf{x},\mathbf{t}\right)d\mathbf{x}d\mathbf{t}\\
 & =\integral{\mathbb{R}^{n}}{}\integral{\mathbb{R}_{+}^{n}}{}e^{-i\left\langle \mathbf{k'},\left(\mathcal{T}^{-1}\right)\mathbf{x}\right\rangle -\left\langle \mathbf{s'},\left(\mathcal{T}^{-1}\right)\mathbf{t}\right\rangle }f\left(\mathbf{x},\mathbf{t}\right)d\mathbf{x}d\mathbf{t}\\
 & =\integral{\mathbb{R}^{n}}{}\integral{\mathbb{R}_{+}^{n}}{}e^{-i\left\langle \mathbf{k'},\mathbf{x}'\right\rangle -\left\langle \mathbf{s'},\mathbf{t}'\right\rangle }f\left(\mathcal{T}\mathbf{x}',\mathcal{T}\mathbf{t}'\right)det(\mathcal{T}^{-1})^{2}d\mathbf{x}'d\mathbf{t}'\\
 & =\integral{\mathbb{R}^{n}}{}\integral{\mathbb{R}_{+}^{n}}{}e^{-i\left\langle \mathbf{k'},\mathbf{x}'\right\rangle -\left\langle \mathbf{s'},\mathbf{t}'\right\rangle }\mathcal{T}f\left(\mathbf{x}',\mathbf{t}'\right)d\mathbf{x}'d\mathbf{t}'.
\end{alignat*}
It follows that for $f\in\mathcal{A}_{n}$ on $\mathbb{R}^{n}\times\hat{B}^{n}$
we have 
\begin{alignat*}{1}
\left(\mathcal{T}^{*}\right)^{-1}\overline{f} & =\overline{\mathcal{T}f}.
\end{alignat*}
To summarize, we have shown that the following diagram is commutative.
\[
\xyR{6pc}\xyC{6pc}\xymatrix{\mathcal{A}_{n}\ar@/_{1pc}/[r]|-{\mathcal{T}}\ar@/_{1.25pc}/[d]|-{FLT} & \mathcal{B}_{n}\ar@/_{1pc}/[l]|-{\mathcal{T}^{-1}}\ar@/_{1.25pc}/[d]|-{FLT}\\
\overline{\mathcal{A}}_{n}\ar@/_{1pc}/[r]|-{\left(\mathcal{T}^{*}\right)^{-1}}\ar@/_{1.25pc}/[u]|-{IFLT} & \overline{\mathcal{B}}_{n}\ar@/_{1pc}/[l]|-{\mathcal{T}^{*}}\ar@/_{1.25pc}/[u]|-{IFLT}
}
\]
Here $\overline{\mathcal{A}}_{n}$ and $\overline{\mathcal{B}}_{n}$
denote the image of $\mathcal{A}_{n}$ and $\mathcal{B}_{n}$ respectively
under the FLT map and IFLT is the inverse FLT. Note that $\overline{\mathcal{A}}_{n}$
is defined on $\mathbb{R}^{n}\times\hat{A}^{n}$ while $\overline{\mathcal{B}}_{n}$
is defined on $\mathbb{R}^{n}\times\hat{B}^{n}$. Next, we note that
the domain of $\Xi_{x',t'}$ on $\mathcal{B}_{n}$ is 
\[
D\left(\Xi_{x',t'}\right)=\left\{ f:\xi\left(-k'_{1},s'_{1}\right)f\left(\mathbf{k}',\mathbf{s}'\right)=h\left(\mathbf{k}',\mathbf{s}'\right),\exists h\in\mathcal{B}_{n}\right\} .
\]
Indeed, if $f\in\mathcal{B}_{n}$ satisfies $\xi\left(-k'_{1},s'_{1}\right)f\left(\mathbf{k}',\mathbf{s}'\right)=h\left(\mathbf{k}',\mathbf{s}'\right)$
for some $h\in\mathcal{B}_{n}$ then by (\ref{eq:In L_omega}) and
the results in \cite{baeumer2005space} we see that $f\in D\left(\Xi_{x,t}\right)$.
Next we show that $\Xi_{\mathbf{x},\mathbf{s}}$ is a PDO with symbol
$\psi\left(-\sum_{i=1}^{n}k_{i},\sum_{i=1}^{n}s_{i}\right)$. Suppose
$f\in D\left(\Xi_{\mathbf{x},\mathbf{s}}\right)$, then
\begin{equation}
\Xi_{x',t'}\mathcal{T}f\left(\mathbf{x}',\mathbf{s}'\right)=\mathcal{T}h\left(\mathbf{x}',\mathbf{s}'\right),\label{eq:symbol proof1}
\end{equation}
for some $h\in\mathcal{A}_{n}$. Applying FLT on both sides we obtain%
\begin{comment}
Now, if $f\in\mathcal{A}_{n}$ and 
\[
\xi\left(\sum_{i=1}^{n}k_{i},\sum_{i=1}^{n}s_{i}\right)\overline{f}\left(\mathbf{k},\mathbf{s}\right)=\overline{h}\left(\mathbf{k},\mathbf{s}\right),
\]
applying $\left(\mathcal{T}^{*}\right)^{-1}$ to both sides we have
\[
\xi\left(k'_{1},s'_{1}\right)\overline{\mathcal{T}f}\left(\mathbf{k}',\mathbf{s}'\right)=\overline{\mathcal{T}h}\left(\mathbf{\mathbf{k}}',\mathbf{s}'\right).
\]
By the results in \cite{Baeumer} and (\ref{eq:In L_omega}) we see
that
\[
\Xi_{x_{1},t_{1}}\mathcal{T}f\left(\mathbf{x}',\mathbf{t}'\right)=\mathcal{T}h\left(\mathbf{x}',\mathbf{t}'\right)
\]
or equivalently
\[
\mathcal{T}^{-1}\Xi_{x_{1},t_{1}}\mathcal{T}f\left(\mathbf{x},\mathbf{t}\right)=h\left(\mathbf{x},\mathbf{t}\right),
\]
and it follows that $\mathcal{T}^{-1}\Xi_{x_{1},t_{1}}\mathcal{T}=\Xi_{\mathbf{x},\mathbf{t}}$
on $\mathcal{C}$. 
\end{comment}
\begin{equation}
\psi\left(-k'_{1},s'_{1}\right)\left(\mathcal{T}^{*}\right)^{-1}\overline{f}\left(\mathbf{k}',\mathbf{s}'\right)=\left(\mathcal{T}^{*}\right)^{-1}\overline{h}\left(\mathbf{k}',\mathbf{s}'\right),\label{eq:symbol proof2}
\end{equation}
multiplying both sides by $\mathcal{T}^{*}$ we have
\begin{equation}
\psi\left(-\sum_{i=1}^{n}k_{i},\sum_{i=1}^{n}s_{i}\right)\left[\mathcal{T}^{*}\left(\mathcal{T}^{*}\right)^{-1}\overline{f}\left(\mathbf{k},\mathbf{s}\right)\right]=\overline{h}\left(\mathbf{k},\mathbf{s}\right).\label{eq:symbol proof3}
\end{equation}
Hence, $\Xi_{\mathbf{x},\mathbf{s}}$ is a PDO with symbol $\psi\left(-\sum_{i=1}^{n}k_{i},\sum_{i=1}^{n}s_{i}\right)$.
It is left to show that the domain of $\Xi_{\mathbf{x},\mathbf{s}}$
is as in (\ref{eq:Domain of directional version PDO}). Since $\mathcal{T}$
is a bijection it is clear that the following bijection holds $\mathcal{T}D\left(\Xi_{\mathbf{x},\mathbf{t}}\right)=D\left(\Xi_{x,t}\right)$
and the claim can be seen to be true through Equations (\ref{eq:symbol proof1}),
(\ref{eq:symbol proof2}) and (\ref{eq:symbol proof3}).
\end{proof}
In order to give a meaning to Equations (\ref{eq:multidimensional FPE of A_E_t})
and (\ref{eq:multidimensional FPE of general A_E_t}) through Proposition
\ref{prop: Multi-dimensional PDO} we define the function
\[
f\left(\mathbf{x};\mathbf{t}\right)=\integral{\mathbb{R}^{n}}{}g\left(\mathbf{x}-\mathbf{y}\right)p\left(d\mathbf{y};\mathbf{t}\right),
\]
where $g$ is a smooth function with compact support in $\mathbb{R}^{n}$
and $p\left(d\mathbf{x};\mathbf{t}\right)$ is a parameterized distribution
as in Subsection \ref{sub:Notations}. It follows that $f\left(\mathbf{x};\mathbf{t}\right)$
is smooth in $\mathbb{R}^{n}$ for every $\mathbf{t}\in\mathbb{R}_{+}^{n}$.
Multiply both sides of Equation (\ref{eq:multidimensional FPE of general A_E_t FLT})
by $\widetilde{g}\left(\mathbf{k}\right)$ and use the convolution-multiplication
property of the FT to obtain 
\[
\Xi_{\mathbf{x},\mathbf{t}}f\left(\mathbf{x};\mathbf{t}\right)=\integral{\mathbb{R}^{n}}{}g\left(\mathbf{x}-\mathbf{y}\right)p_{0}\left(d\mathbf{y};\mathbf{t}\right),
\]
where 

\begin{alignat*}{1}
p_{0}\left(d\mathbf{x};\mathbf{t}\right) & =\integral{r_{1}=0}{\infty}K\left(dx_{1},dr_{1}+t_{1}\right)\\
 & \phantom{}\times Q_{t_{2}-t_{1}}\left(x_{1},r_{1};dx_{2},dr_{2}\right)\circ\cdots Q_{t_{n}-t_{n-1}}\left(x_{n-1},r_{n-1};dx_{n},dr_{n}\right)\circ.
\end{alignat*}
This interpretation of (\ref{eq:multidimensional FPE of A_E_t}) and
(\ref{eq:multidimensional FPE of general A_E_t}) is in the spirit
of that in \cite[Chpater 4]{pazy2012semigroups} and was used in \cite[p. 15]{busani2014}.

\section{Conclusions}

In this paper we find the FDD's FFPEs of the process $A_{E_{t}}$
where $\left(A_{t},D_{t}\right)$ is a L\'{e}vy process, $D_{t}$ is a
strictly increasing subordinator with no drift and $E_{t}$ is the
inverse of $D_{t}$. The general form of these FFPEs (Eq. \ref{eq:multidimensional FPE of general A_E_t})
is a PDO in time and space variables applied to the distribution of
the process on one side of the equation while on the other we have
a term that accounts for the portion of particles that yet to be mobilized.
Moreover, considering the difference between the RL derivative and
that of Caputo's in the one dimensional case, and compared to the
finite dimensional one, it seems that the RL derivative is more suitable
in the context of CTRWL. We also showed that the PDO which appear
in Corollary \ref{cor:MD FPE of general A_E_t} are indeed bona fide
PDO and in fact a directional version of their one dimensional counterparts.

\section*{\textbf{Acknowledgment}}

The author would like to thank Mark M. Meerschaert of the Department
of Statistics and Probability at Michigan State University for suggesting
the problem of finding the finite dimensional FFPEs of CTRWL and introducing
the author to his work on the subject. 

\newpage{}

\bibliographystyle{plain}
\bibliography{FDFPEref}

\end{document}